\newlength{\defbaselineskip}
\newcommand{\setlinespacing}[1]%
           {\setlength{\baselineskip}{#1 \defbaselineskip}}
\numberwithin{equation}{section}
\newtheorem{thm}{Theorem}[section]
\newtheorem{lem}[thm]{Lemma}
\newtheorem{prop}[thm]{Proposition}
\theoremstyle{definition}
\theoremstyle{remark}
\numberwithin{equation}{section}
\begin{document}
\title[The radius of spatial analyticity]
{On the radius of spatial analyticity for defocusing nonlinear Schr\"odinger equations}

\author{Jaeseop Ahn, Jimyeong Kim and Ihyeok Seo}

\thanks{This research was supported by NRF-2019R1F1A1061316.}

\subjclass[2010]{Primary: 32D15; Secondary: 35Q55}
\keywords{Spatial analyticity, Nonlinear Schr\"odinger equations.}

\address{Department of Mathematics, Sungkyunkwan University, Suwon 16419, Republic of Korea}
\email{j.ahn@skku.edu}

\email{jimkim@skku.edu}

\email{ihseo@skku.edu}

\begin{abstract}
In this paper we study spatial analyticity of solutions to the defocusing nonlinear Schr\"odinger equations
$iu_t + \Delta u = |u|^{p-1}u$, given initial data which is analytic with fixed radius.
It is shown that the uniform radius of spatial analyticity of solutions at later time $t$ cannot decay faster than $1/|t|$ as $|t|\rightarrow\infty$.
This extends the previous work of Tesfahun \cite{Te} for the cubic case $p=3$ to the cases where $p$ is any odd integer greater than $3$.
\end{abstract}

\maketitle

\section{Introduction}\label{sec1}

Consider the Cauchy problem for the defocusing nonlinear Schr\"odinger equations
\begin{equation}\label{NLS}
\begin{cases}
iu_t + \Delta u = |u|^{p-1}u, \\
u(0, x) = u_0(x),
\end{cases}
\end{equation}
where $u:\mathbb{R}^{1+d}\rightarrow\mathbb{C}$ and $p>1$ is an odd integer.
The well-posedness of this Cauchy problem with initial data in Sobolev spaces $H^s(\mathbb{R}^d)$ has been intensively studied.
See for instance \cite{BCF,GV,Ts,CKSTT,FG} and references therein.
In particular, the global well-posedness of \eqref{NLS} is known for
\begin{equation}\label{well}
\begin{cases}
d=1,2\quad\text{with}\quad 1<p<\infty, \\
d=3\quad\text{with}\quad p=3.
\end{cases}
\end{equation}

While the well-posedness in Sobolev spaces is well-understood, much less is known about spatial analyticity of the solutions to the above
Cauchy problem.
Our attention in this paper will be focused on the situation where we consider a real-analytic initial data
with uniform radius of analyticity $\sigma_0>0$,
so there is a holomorphic extension to a complex strip
$$S_{\sigma_0}=\{x+iy:x,y\in\mathbb{R}^d,\,|y_1|,|y_2|,\,\cdots,|y_d|<\sigma_0\}.$$
The question is then whether this property may be continued analytically to a complex strip $S_{\sigma(t)}$ for all later times $t$,
but with a possibly smaller and shrinking radius of analyticity $\sigma(t)>0$.

This type of question was first introduced by Kato and Masuda \cite{KM},
and Bona and Gruji\'{c} \cite{BG} gave an explicit lower bound of the radius $\sigma(t)$ for the Korteweg-de Vries equation.
In fact, it is shown that  $\sigma(t)\geq e^{-ct^2}$ for large $t$ which shows that the radius can decay to zero at most at an exponential rate.
Later, this exponential decay was improved to an algebraic lower bound, $ct^{-12}$, by Bona, Gruji\'{c} and Kalisch \cite{BGK}.
See \cite{SS,Te2,HW} for further refinements.
We also refer the reader to \cite{BGK2,Te,P,ST,S2} for other nonlinear dispersive equations like Schr\"odinger, Klein-Gordon and Dirac-Klein-Gordon equations.

In the present paper we shall work on the Cauchy problem \eqref{NLS}, motivated by an earlier work
on the defocusing cubic nonlinear Schr\"odinger equation, the case $p=3$ in \eqref{NLS}, by Tesfahun \cite{Te}
who gave a lower bound of the radius, $\sigma(t)\geq ct^{-1}$, for large $t$.
It will turn out that it is still possible for the other cases $p>3$ to have the same lower bound.

A nice choice of analytic function space suitable to study spatial analyticity of solution is the Gevrey space
$G^{\sigma,s}(\mathbb{R}^d)$, $\sigma\geq0$, $s\in\mathbb{R}$, with the norm
$$\|f\|_{G^{\sigma,s}}=\big\| e^{\sigma\| D\|} \langle D\rangle^s f\big\|_{L^2},$$
where $D=-i\nabla$ with Fourier symbol $\xi$, $\|\xi\|=\sum_{i=1}^{d} |\xi_i|$ and $\langle\xi\rangle=\sqrt{1+|\xi|^2}$.
In fact, according to the Paley-Wiener theorem\footnote{The proof given for $s=0$ in \cite{K} applies also for $s\in\mathbb{R}$
with some obvious modifications.} (see e.g. \cite{K}, p. 209),
a function $f$ belongs to $G^{\sigma,s}$ with $\sigma>0$
if and only if it is the restriction to the real line of a function $F$ which is holomorphic in the strip
$S_\sigma=\{ x+iy:x,y\in\mathbb{R}^d,\,|y_1|,|y_2|,\,\cdots,|y_d|< \sigma \}$ and satisfies
$\sup_{|y| < \sigma} \| F(x+iy)\|_{H_x^s} <\infty$.
In other words, every function in $G^{\sigma,s}$ with $\sigma>0$ has an analytic extension to the strip $S_\sigma$
(see e.g. \cite{S} for a proof).
This is one of the key properties of the Gevrey space
and shows that the following result gives an algebraic lower bound on the radius of analyticity $\sigma(t)$ of the solution to \eqref{NLS}
as the time $t$ tends to infinity.

\begin{thm}\label{thm1}
Let $d=1,2$.
Let $u$ be the global $C^\infty$ solution of \eqref{NLS} with any odd integer $p>3$
and $u_0 \in G^{\sigma_0,s}(\mathbb{R}^d)$ for some $\sigma_0>0$ and $s\in\mathbb{R}$.
Then, for all $t\in\mathbb{R}$
$$u(t)\in G^{\sigma(t),s}(\mathbb{R}^d)$$
with $\sigma(t)\geq c|t|^{-1}$ as $|t| \rightarrow \infty$.
Here, $c>0$ is a constant depending on $\|u_0\|_{G^{\sigma_0,s}(\mathbb{R}^d)}$, $\sigma_0$, $s$ and $p$.
\end{thm}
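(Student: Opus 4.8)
The plan is to combine a local well-posedness theory in the Gevrey spaces $G^{\sigma,1}$, uniform in the radius $\sigma\in(0,\sigma_0]$, with an almost-conservation law for a Gevrey-modified mass–energy functional, and then iterate. Two reductions come first. Since $e^{\frac{\sigma}{2}\|\xi\|}\langle\xi\rangle^{s'}\lesssim e^{\sigma\|\xi\|}\langle\xi\rangle^{s}$ for every $s'\in\mathbb R$, one has $G^{\sigma_0,s}\hookrightarrow G^{\sigma_0/2,1}$ and also $G^{\sigma,1}\hookrightarrow G^{\sigma/2,s}$, so it is enough to prove $u(t)\in G^{\sigma(t),1}$ with $\sigma(t)\gtrsim c|t|^{-1}$ for data in $G^{\sigma_0,1}$; the general statement then follows with $\sigma(t)/2$ in place of $\sigma(t)$. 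Also, the time reversal $u(t,x)\mapsto\overline{u(-t,x)}$ preserves \eqref{NLS}, so we may take $t>0$. Finally, since for odd $p$ the nonlinearity is the polynomial $|u|^{p-1}u=u^{(p+1)/2}\overline{u}^{(p-1)/2}$ and every $u_0\in G^{\sigma_0,1}$ with $\sigma_0>0$ lies in $\bigcap_N H^N$, conservation of mass and energy together with the defocusing sign gives an a priori bound $\sup_{t\in\mathbb R}\|u(t)\|_{H^1}=:C_0<\infty$.

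For the local theory one runs the usual contraction for $e^{\sigma\|D\|}u$: since $e^{\sigma\|D\|}$ commutes with $e^{it\Delta}$, the Strichartz estimates (and, when $d=1$, the algebra property of $G^{\sigma,1}(\mathbb R)$) carry over, producing a unique solution on some interval $[0,\delta]$ with $\delta=\delta(\|u_0\|_{G^{\sigma_0,1}})>0$ independent of $\sigma\le\sigma_0$ and $\sup_{[0,\delta]}\|u(t)\|_{G^{\sigma,1}}\le 2\|u_0\|_{G^{\sigma,1}}$; by $H^1$-uniqueness it is the given $C^\infty$ solution. Next I would introduce a functional $\mathcal A_\sigma(u)$, a Gevrey version of the sum of mass and energy, comparable to $\|u\|_{G^{\sigma,1}}^2$ up to higher-order terms, with $\mathcal A_\sigma(u)\le\mathcal A_{\sigma'}(u)$ for $\sigma\le\sigma'$ and $\frac{d}{dt}\mathcal A_0(u)=0$ along solutions, and prove the almost-conservation law
\[
\Big|\tfrac{d}{dt}\mathcal A_\sigma(u(t))\Big|\le C\,\sigma\,\big(1+\mathcal A_\sigma(u(t))\big)^{(p+1)/2},\qquad C=C(d,p),
\]
valid whenever $u(t)\in G^{\sigma,1}$. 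The mechanism is that, after differentiating $\mathcal A_\sigma$ and using \eqref{NLS}, the terms that are not killed by the mass/energy identities are commutator expressions built from $e^{\sigma\|D\|}F(u)-F(e^{\sigma\|D\|}u)$, whose multiplier on the frequency-sum set $\xi_0=\xi_1+\cdots+\xi_p$ satisfies
\[
\Big|e^{\sigma\|\xi_0\|}-\prod_{j=1}^p e^{\sigma\|\xi_j\|}\Big|\le\sigma\Big(\sum_{j=1}^p\|\xi_j\|-\|\xi_0\|\Big)\prod_{j=1}^p e^{\sigma\|\xi_j\|}\lesssim\sigma\,\|\xi_{(2)}\|\prod_{j=1}^p e^{\sigma\|\xi_j\|},
\]
with $\|\xi_{(2)}\|$ the second largest of $\|\xi_0\|,\dots,\|\xi_p\|$; this yields one power of $\sigma$ and at most one derivative on the second-largest frequency, which in $d\le2$ is absorbed into the $G^{\sigma,1}$-norms of the $p+1$ factors by Sobolev embedding and Hölder's inequality.

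I expect this second step to be the main obstacle, and it is where one must go beyond Tesfahun's cubic argument: a naive Gevrey energy $\tfrac12\|e^{\sigma\|D\|}\nabla u\|_{L^2}^2+\tfrac1{p+1}\|e^{\sigma\|D\|}u\|_{L^{p+1}}^{p+1}$ generates, among the commutator terms, contributions in which two derivatives land on the top frequency, and these are not controlled by $\|u\|_{G^{\sigma,1}}$ once $p\ge5$ (or in $d=2$). This has to be remedied either by adding correction terms to $\mathcal A_\sigma$ — in the spirit of the I-method, now adapted to the $(p+1)$-linear structure of the nonlinearity — so that the top-order pieces cancel, or by estimating the time integral of $\frac{d}{dt}\mathcal A_\sigma$ in the Strichartz norms supplied by the local theory rather than pointwise in $t$. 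In either case the decisive point is that the gain in $\sigma$ must be \emph{linear}: a bound with $\sigma^\theta$, $\theta<1$, would only yield the weaker rate $|t|^{-1/\theta}$.

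The iteration is then routine. Given $T$ large enough that $\sigma:=cT^{-1}\le\sigma_0$, with $c>0$ to be chosen, cover $[0,T]$ by intervals of length $\delta$ and integrate the almost-conservation law to obtain
\[
\mathcal A_\sigma(u(t))\le\mathcal A_\sigma(u_0)+C\,\sigma\int_0^t\big(1+\mathcal A_\sigma(u(\tau))\big)^{(p+1)/2}\,d\tau,\qquad 0\le t\le T.
\]
Since $\mathcal A_\sigma(u_0)\le\mathcal A_{\sigma_0}(u_0)=:C_1<\infty$ uniformly in $\sigma\le\sigma_0$, and $\sigma t\le c$ on $[0,T]$, a continuity argument in $t$ (the integrand being at most $(1+2C_1)^{(p+1)/2}$ as long as $\mathcal A_\sigma(u)\le2C_1$) shows that if $c$ is small enough, depending only on $C_1$ and $p$, then $\mathcal A_\sigma(u(t))<2C_1$ throughout $[0,T]$, so in particular the solution persists in $G^{\sigma,1}$ up to time $T$. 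Hence $u(T)\in G^{\sigma,1}$ with $\sigma=cT^{-1}$, i.e.\ the uniform radius of analyticity at time $T$ is at least $cT^{-1}$; as $c$ does not depend on $T$, and the radius stays bounded below on bounded time intervals (by finitely many applications of the local theory), this gives $\sigma(t)\ge c|t|^{-1}$ as $|t|\to\infty$, and the reductions above return the conclusion for the given $s\in\mathbb R$.
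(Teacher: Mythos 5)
Your proposal follows essentially the same route as the paper: reduction to $s=1$ by the Gevrey embeddings, time reversal, local well-posedness in $G^{\sigma,1}$ with lifespan depending only on $\|u_0\|_{G^{\sigma,1}}$, an almost-conservation law for the Gevrey mass--energy functional with a \emph{linear} gain in $\sigma$ coming from the commutator bound $\sigma\|\xi_{(2)}\|$ on the second-largest frequency, and iteration over $O(T/\delta)$ steps with $\sigma\sim c/T$. The one step you leave as a fork --- how to close the almost-conservation estimate when $p\ge 5$ --- is resolved in the paper by the second of your two options: the commutator terms are paired by duality in $X^{0,\pm b}$ and controlled by multilinear Strichartz estimates (see \eqref{lemma_estimate_1} and \eqref{lemma_estimate_5}) in which one derivative lands on each of the two largest frequencies rather than two on the top one, so the bound closes with two factors in $X^{1,b}$ and $p-2$ factors in $X^{s_1,b}$, $s_1<1$, and no I-method correction terms are needed.
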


Only when $d=1,2$ does the existing well-posedness theory in $H^{s}$ (see \eqref{well}) guarantee
the existence of the global $C^\infty$ solution in the theorem, given initial data $u_0\in G^{\sigma_0,s}$
for all $\sigma_0>0$ and $s\in\mathbb{R}$.
Indeed, observe first that ${G^{0,s}}$ coincides with the Sobolev space $H^s$ and the embeddings
\begin{equation}\label{emb}
G^{\sigma,s}\subset G^{\sigma^\prime,s^\prime}
\end{equation}
for all $0\leq\sigma'<\sigma$ and $s,s'\in\mathbb{R}$.
As a consequence of this embedding with $\sigma'=0$ and the existing well-posedness theory in $H^{s'}$,
the Cauchy problem \eqref{NLS} has a unique smooth solution for all time, given initial data $u_0\in G^{\sigma_0,s}$
for all $\sigma_0>0$ and $s\in\mathbb{R}$.

The outline of this paper is as follows:
In Section \ref{sec2} we present some preliminaries which will be used for the proof of Theorem \ref{thm1}.
In Section \ref{sec3} we obtain some multilinear estimates in Gevrey-Bourgain spaces.
By making use of a contraction argument involving these estimates,
we prove that in a short time interval $0\leq t \leq\delta$ with $\delta>0$ depending on the norm of the initial data,
the radius of analyticity remains strictly positive.
Next, we prove an approximate conservation law, although the conservation of $G^{\sigma_0,1}$-norm of the solution does not hold exactly,
in order to control the growth of the solution in the time interval $[0, \delta]$, measured in the data norm $G^{\sigma_0,1}$.
Section \ref{sec4} is devoted to the proofs of such a local result and an approximate conservation law.
In the final section, Section \ref{sec5}, we finish the proof of Theorem \ref{thm1} by iterating the local result based on the conservation law.

Throughout this paper, the letter $C$ stands for a positive constant which may be different
at each occurrence.

\section{Preliminaries}\label{sec2}

In this section we introduce some function spaces and linear estimates which will be used for the proof of Theorem \ref{thm1}
in later sections.

For $s,b\in\mathbb{R}$, we use  $X^{s,b}=X^{s,b}(\mathbb{R}^{1+d})$ to denote the Bourgain space defined by the norm
$$
\|f\|_{X^{s,b}} =\| \langle\xi\rangle^s\langle\tau+|\xi|^2\rangle^b\widehat{f}(\tau,\xi)\|_{L^2_{\tau,\xi}},
$$
where $\widehat{f}$ denotes the space-time Fourier transform given by
$$
\widehat{f}(\tau,\xi)=\int_{\mathbb{R}^{1+d}} e^{-i(t\tau+x\cdot\xi)}f(t,x) \ dtdx.
$$
The restriction of the Bourgain space, denoted $X^{s,b}_\delta$, to a time slab $(0,\delta )\times\mathbb{R}^d$
is a Banach space when equipped with the norm
$$
\|f\|_{X^{s,b}_\delta}=\inf\big\{\|g\|_{X^{s,b}} : g=f\,\, \text{on}\,\, (0,\delta)\times\mathbb{R}^d\big\}.
$$
We also need to introduce the Gevrey-Bourgain space $X^{\sigma,s,b}=X^{\sigma,s,b}(\mathbb{R}^{1+d})$ defined by the norm
$$
\|f\|_{X^{\sigma,s,b}}=\big\| e^{\sigma\| D\|}f\big\|_{X^{s,b}}.
$$
Its restriction $X^{\sigma,s,b}_\delta$ to a time slab $(0,\delta)\times\mathbb{R}^d$ is defined in a similar way as above,
and when $\sigma=0$ it coincides with the Bourgain space $X^{s,b}$.

The $X^{\sigma,s,b}$-estimates in Lemmas \ref{lem00}, \ref{lem2} and \ref{lem3} follow easily by substitution  $f\rightarrow e^{\sigma\|D\|}f$
using the properties of $X^{s,b}$-spaces and the restrictions thereof.
When $\sigma=0$, the proofs of Lemmas \ref{lem00} and \ref{lem2} can be found in Section 2.6 of \cite{T},
and Lemma \ref{lem3} follows by the argument used for Lemma 3.1 of \cite{CKSTT2}.

\begin{lem}\label{lem00}
Let $\sigma \geq 0$, $s\in\mathbb{R}$ and $b>1/2$.
Then, $X^{\sigma,s,b}\subset C(\mathbb{R},G^{\sigma,s})$ and
$$\sup_{t\in\mathbb{R}}\|f(t)\|_{G^{\sigma,s}}\leq C\|f\|_{X^{\sigma,s,b}},$$
where $C>0$ is a constant depending only on $b$.
\end{lem}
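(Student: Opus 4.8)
The plan is to reduce immediately to the case $\sigma=0$ and then establish the classical embedding $X^{s,b}\subset C(\mathbb{R},H^s)$. Since $e^{\sigma\|D\|}$ is a Fourier multiplier, it defines isometric identifications of $X^{\sigma,s,b}$ with $X^{s,b}$ and of $G^{\sigma,s}$ with $H^s$ — this is immediate from the very definitions of the norms — and it is invertible on tempered distributions with inverse $e^{-\sigma\|D\|}$ (the symbol $e^{-\sigma\|\xi\|}$ being bounded). Hence $f\in X^{\sigma,s,b}$ lies in $C(\mathbb{R},G^{\sigma,s})$ with $\sup_t\|f(t)\|_{G^{\sigma,s}}\le C\|f\|_{X^{\sigma,s,b}}$ if and only if $g:=e^{\sigma\|D\|}f$ lies in $C(\mathbb{R},H^s)$ with the corresponding bound; so from now on I assume $\sigma=0$.

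For the norm bound, I would first take $f$ Schwartz, so that restriction to a fixed time $t$ is unambiguous, and write the spatial Fourier transform of $f(t,\cdot)$ as $\mathcal{F}_x f(t)(\xi)=c\int_{\mathbb{R}} e^{it\tau}\widehat f(\tau,\xi)\,d\tau$ with a normalization constant $c$. Multiplying by $\langle\xi\rangle^s$ and inserting $\langle\tau+|\xi|^2\rangle^b\langle\tau+|\xi|^2\rangle^{-b}$, the Cauchy--Schwarz inequality in $\tau$ gives
$$\big|\langle\xi\rangle^s\mathcal{F}_x f(t)(\xi)\big|^2\le c^2\Big(\int_{\mathbb{R}}\langle\tau+|\xi|^2\rangle^{-2b}\,d\tau\Big)\int_{\mathbb{R}}\langle\tau+|\xi|^2\rangle^{2b}\langle\xi\rangle^{2s}|\widehat f(\tau,\xi)|^2\,d\tau.$$
The first integral equals $C_b:=\int_{\mathbb{R}}\langle\tau\rangle^{-2b}\,d\tau$, which is finite precisely because $b>1/2$; this is the only place the hypothesis on $b$ enters. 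Integrating in $\xi$ and applying Plancherel yields $\|f(t)\|_{H^s}^2\le c^2C_b\,\|f\|_{X^{s,b}}^2$ uniformly in $t$, hence the claimed bound with $C=C(b)$. The general case follows by density of Schwartz functions in $X^{s,b}$.

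For continuity, I would note that for a.e.\ $\xi$ the function $\tau\mapsto\langle\tau+|\xi|^2\rangle^b\langle\xi\rangle^s\widehat f(\tau,\xi)$ lies in $L^2_\tau$, so again because $b>1/2$ the function $\tau\mapsto\langle\xi\rangle^s\widehat f(\tau,\xi)$ lies in $L^1_\tau$, with $L^1_\tau$-norm bounded by $C_b^{1/2}$ times its weighted $L^2_\tau$-norm. Consequently $t\mapsto\langle\xi\rangle^s\mathcal{F}_x f(t)(\xi)$ is, for a.e.\ $\xi$, the Fourier transform of an $L^1$ function, hence continuous in $t$, with modulus bounded by $c\,C_b^{1/2}\big(\int_{\mathbb{R}}\langle\tau+|\xi|^2\rangle^{2b}\langle\xi\rangle^{2s}|\widehat f(\tau,\xi)|^2\,d\tau\big)^{1/2}$, a square-integrable function of $\xi$. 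Given $t_n\to t$, the integrand $|\langle\xi\rangle^s(\mathcal{F}_x f(t_n)(\xi)-\mathcal{F}_x f(t)(\xi))|^2$ then tends to $0$ pointwise and is dominated by an integrable function independent of $n$, so dominated convergence gives $\|f(t_n)-f(t)\|_{H^s}\to 0$; thus $f\in C(\mathbb{R},H^s)$.

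The main obstacle is the familiar one for embedding statements of this kind — giving meaning to the pointwise-in-time values of a general element of $X^{s,b}$ — and it is resolved exactly as above: prove the uniform bound for Schwartz $f$, extend by density, and observe that the limit inherits continuity from the dominated-convergence argument. No genuinely hard estimate is involved; the content is entirely the elementary fact that $\int_{\mathbb{R}}\langle\tau\rangle^{-2b}\,d\tau<\infty$ iff $b>1/2$, together with routine bookkeeping of the Fourier variables.
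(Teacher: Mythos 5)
Your proof is correct and follows essentially the same route as the paper: the paper disposes of this lemma by the substitution $f\mapsto e^{\sigma\|D\|}f$ reducing to $\sigma=0$ and then cites Section 2.6 of Tao's book for the classical embedding $X^{s,b}\subset C(\mathbb{R},H^s)$, whose standard proof is precisely your Cauchy--Schwarz argument exploiting $\int\langle\tau\rangle^{-2b}\,d\tau<\infty$ for $b>1/2$, plus density and dominated convergence. You have merely written out in full what the paper delegates to the reference.
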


\begin{lem}\label{lem2}
Let $\sigma \geq 0$, $s\in\mathbb{R}$, $-1/2<b<b^\prime<1/2$ and $\delta>0$. Then
$$
\|f\|_{X^{\sigma,s,b}_\delta} \leq C\delta^{b^\prime-b}\|f\|_{X^{\sigma,s,b^\prime}_\delta},
$$
where the constant $C>0$ depends only on $b$ and $b'$.
\end{lem}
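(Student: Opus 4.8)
The plan is to reduce the whole statement to the case $\sigma=0$, where the estimate is classical, by the substitution $f\mapsto e^{\sigma\|D\|}f$, and then to recall the $\sigma=0$ argument, which is the one carried out in Section 2.6 of \cite{T}.

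The point of the reduction is that $e^{\sigma\|D\|}$ is a Fourier multiplier in the space variable alone. Hence it commutes with multiplication by functions of $t$ and with the operation of restricting a distribution to the slab $(0,\delta)\times\mathbb R^d$, and it is invertible with inverse $e^{-\sigma\|D\|}$. Consequently, for a fixed $f$, the assignment $g\mapsto e^{\sigma\|D\|}g$ is a bijection from the set of extensions of $f$ onto the set of extensions of $e^{\sigma\|D\|}f$; since moreover it is by definition an isometry from $X^{\sigma,s,c}$ onto $X^{s,c}$, taking the infimum in the definition of the restriction norms yields
$$\|f\|_{X^{\sigma,s,c}_\delta}=\|e^{\sigma\|D\|}f\|_{X^{s,c}_\delta}\qquad(c\in\mathbb R).$$
Applying the $\sigma=0$ statement to $e^{\sigma\|D\|}f$ and using this identity for $c=b$ and for $c=b'$ gives $\|f\|_{X^{\sigma,s,b}_\delta}\le C\delta^{b'-b}\|f\|_{X^{\sigma,s,b'}_\delta}$ with the same constant. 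This substitution is the only genuinely new ingredient, and the identical device produces Lemmas \ref{lem00} and \ref{lem3} from their $\sigma=0$ counterparts; it is also exactly the device alluded to just before Lemma \ref{lem00}.

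It then remains to recall why $\|f\|_{X^{s,b}_\delta}\le C\delta^{b'-b}\|f\|_{X^{s,b'}_\delta}$ for $-1/2<b<b'<1/2$. When $\delta\ge1$ this is immediate, since $\langle\tau+|\xi|^2\rangle\ge1$ forces $\|\cdot\|_{X^{s,b}}\le\|\cdot\|_{X^{s,b'}}$, hence the same inequality for the restriction norms, and $1\le\delta^{b'-b}$ because $b'>b$. So one may assume $0<\delta\le1$; fix $\psi\in C_c^\infty(\mathbb R)$ with $\psi\equiv1$ on $[-1,1]$ and set $\psi_\delta(t)=\psi(t/\delta)$. Given $f$, choose an extension $g$ with $\|g\|_{X^{s,b'}}\le2\|f\|_{X^{s,b'}_\delta}$; since $\psi_\delta\equiv1$ on $(0,\delta)$, the product $\psi_\delta g$ is again an extension of $f$, so $\|f\|_{X^{s,b}_\delta}\le\|\psi_\delta g\|_{X^{s,b}}$, and it suffices to prove the free-standing bound $\|\psi_\delta g\|_{X^{s,b}}\le C\delta^{b'-b}\|g\|_{X^{s,b'}}$. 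Writing $g=e^{it\Delta}\langle D\rangle^{-s}v$ one has $\|g\|_{X^{s,c}}=\|v\|_{H^c_tL^2_x}$, and, since $\psi_\delta$ does not act in $x$, also $\|\psi_\delta g\|_{X^{s,c}}=\|\psi_\delta v\|_{H^c_tL^2_x}$; by Plancherel in the spatial variable the matter reduces to the one-dimensional estimate $\|\psi_\delta v\|_{H^b_t}\le C\delta^{b'-b}\|v\|_{H^{b'}_t}$ for $-1/2<b<b'<1/2$.

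This last estimate is a standard consequence of Sobolev embeddings and interpolation: for $0\le b\le b'<1/2$ it interpolates between the endpoint $b=0$, namely $\|\psi_\delta v\|_{L^2}\le C\delta^{b'}\|v\|_{H^{b'}}$ (which follows from H\"older's inequality and the embedding $H^{b'}(\mathbb R)\hookrightarrow L^{2/(1-2b')}(\mathbb R)$, since $\|\psi_\delta\|_{L^{1/b'}}\approx\delta^{b'}$), and the endpoint $b=b'$, namely the uniform-in-$\delta$ boundedness of multiplication by $\psi_\delta$ on $H^{b'}(\mathbb R)$; the remaining ranges of $b,b'$ are obtained by duality, together with a further interpolation when $b$ and $b'$ have opposite signs. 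This one-dimensional computation, and the reduction leading to it, are precisely what is done in Section 2.6 of \cite{T}. I expect the only thing requiring a little care to be the verification that $e^{\sigma\|D\|}$ respects the restriction norms — that is, its commutation with time-localization and the resulting bijection between extension classes — but this is elementary; everything downstream is the routine, already-referenced calculation.
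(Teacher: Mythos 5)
Your proposal is correct and follows the same route as the paper: reduce to $\sigma=0$ via the substitution $f\mapsto e^{\sigma\|D\|}f$ (which commutes with time localization and hence with the restriction norms), and then invoke the classical $X^{s,b}_\delta$ estimate from Section 2.6 of \cite{T}. The paper merely cites that reference for the $\sigma=0$ case, whereas you also sketch its proof; the sketch is accurate.
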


\begin{lem}\label{lem3}
Let $\sigma \geq 0$, $s\in\mathbb{R}$, $-1/2<b<1/2$ and $\delta>0$.
Then, for any time interval $I\subset[0,\delta]$,
$$
\|\chi_I f\|_{X^{\sigma,s,b}} \leq C\|f\|_{X^{\sigma,s,b}_\delta},
$$
where $\chi_I(t)$ is the characteristic function of $I$, and the constant $C>0$ depends only on $b$.
\end{lem}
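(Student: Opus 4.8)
The plan is to peel off, one at a time, the Gevrey weight, the time restriction, and the free Schr\"odinger flow, until the statement becomes an elementary one-variable bound. First I would reduce to $\sigma=0$. Since $e^{\sigma\|D\|}$ is a Fourier multiplier in the spatial variables only, it commutes with multiplication by $\chi_I(t)$; and because it acts locally in time, the map $g\mapsto e^{\sigma\|D\|}g$ carries the set of extensions of $f$ from the slab $(0,\delta)\times\mathbb{R}^d$ to $\mathbb{R}^{1+d}$ bijectively onto the set of extensions of $e^{\sigma\|D\|}f$. Hence $\|\chi_I f\|_{X^{\sigma,s,b}}=\|\chi_I(e^{\sigma\|D\|}f)\|_{X^{s,b}}$ and $\|f\|_{X^{\sigma,s,b}_\delta}=\|e^{\sigma\|D\|}f\|_{X^{s,b}_\delta}$, so it suffices to prove $\|\chi_I F\|_{X^{s,b}}\le C(b)\,\|F\|_{X^{s,b}_\delta}$ for $F=e^{\sigma\|D\|}f$; this is exactly the substitution remark made just before Lemma \ref{lem00}, specialised to the present situation.

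Next, given $\varepsilon>0$ I would choose an extension $G\in X^{s,b}$ of $F$ with $\|G\|_{X^{s,b}}\le\|F\|_{X^{s,b}_\delta}+\varepsilon$. As $I\subset[0,\delta]$, we have $\chi_I F=\chi_I G$ as elements of $X^{s,b}$, so it is enough to show $\|\chi_I G\|_{X^{s,b}}\le C(b)\|G\|_{X^{s,b}}$ and let $\varepsilon\to0$. For this, conjugate out the free flow: at each fixed time $h\mapsto e^{\mp it\Delta}h$ is a spatial Fourier multiplier, so setting $v(t)=e^{-it\Delta}G(t)$ gives $\|G\|_{X^{s,b}}=\|v\|_{H^{b}(\mathbb{R}_t;\,H^s(\mathbb{R}^d_x))}$ after the change of variables $\tau\mapsto\tau+|\xi|^2$ in the definition of the $X^{s,b}$-norm, and the same spatial operation commutes with $\chi_I(t)$, whence $\|\chi_I G\|_{X^{s,b}}=\|\chi_I v\|_{H^{b}(\mathbb{R}_t;\,H^s(\mathbb{R}^d_x))}$. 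Treating the spatial variable as a parameter (via $\langle D_x\rangle^s$ and Fubini), matters collapse to the scalar claim: for $|b|<1/2$, multiplication by the characteristic function of an interval is bounded on $H^b(\mathbb{R})$ with operator norm depending only on $b$, in particular not on the interval.

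To prove the scalar claim, the case $b=0$ is trivial; for $0<b<1/2$ I would use the Slobodeckij identity
\[
\|w\|_{H^b(\mathbb{R})}^2\simeq\|w\|_{L^2}^2+\iint_{\mathbb{R}^2}\frac{|w(t)-w(t')|^2}{|t-t'|^{1+2b}}\,dt\,dt'.
\]
Splitting the double integral of $\chi_I w$ according to whether $t,t'$ lie in $I$, the only contribution not already controlled by $\|w\|_{H^b}^2$ is that with $t\in I$, $t'\notin I$ (and its mirror image), which after translating an endpoint of $I$ to the origin is dominated by $\int_0^\infty\!\int_{-\infty}^0\frac{|w(t)|^2}{|t-t'|^{1+2b}}\,dt'\,dt=\frac{1}{2b}\int_0^\infty\frac{|w(t)|^2}{t^{2b}}\,dt\lesssim_b\|w\|_{\dot H^b(\mathbb{R})}^2$ by the fractional Hardy inequality, which holds precisely for $0<b<1/2$; the constant manifestly does not see $I$. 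The range $-1/2<b<0$ then follows by duality, $\chi_I$ being self-adjoint and $H^b(\mathbb{R})=(H^{-b}(\mathbb{R}))^*$. This is in essence the argument behind Lemma 3.1 of \cite{CKSTT2}.

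The main — and really the only nonroutine — obstacle is this scalar multiplier bound together with the requirement that its constant be uniform over all intervals, equivalently independent of $\delta$; that uniformity is what forces the use of translation-invariant tools (the Slobodeckij seminorm, the Hardy inequality) rather than any estimate that degenerates with $|I|$. It is also exactly the point where the hypothesis $|b|<1/2$ cannot be relaxed, since for $|b|\ge 1/2$ the characteristic function of an interval is not even in $H^{b}(\mathbb{R})$ and therefore fails to be a pointwise multiplier there.
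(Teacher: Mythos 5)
Your argument is correct, and it follows exactly the route the paper indicates: the paper gives no written proof of this lemma, stating only that one reduces to $\sigma=0$ by the substitution $f\rightarrow e^{\sigma\|D\|}f$ and then invokes the argument of Lemma 3.1 of \cite{CKSTT2}, which is precisely your reduction to the one-dimensional fact that sharp interval cutoffs are bounded multipliers on $H^{b}(\mathbb{R})$ for $|b|<1/2$ uniformly in the interval. Your Slobodeckij-seminorm/fractional-Hardy proof of that scalar fact (with duality for negative $b$) is a standard and complete way to supply the details the paper leaves to the citation.
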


\begin{lem}(See \cite{T})
Let $b>1/2$ and $f\in X^{0,b}$.
Then,
\begin{equation}\label{stricartz_2}
\|f\|_{L^q_tL^r_x} \leq C\|f\|_{X^{0,b}}
\end{equation}
for Schr\"odinger-admissible pair $(q,r)$, i.e.,
$$2\leq q,r\leq \infty,\quad
\frac{2}{q}+\frac{d}{r}=\frac{d}{2}\quad\text{and}\quad (q,r,d)\neq(2,\infty,2).
$$
\end{lem}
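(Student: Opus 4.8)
The plan is to deduce \eqref{stricartz_2} from the classical Strichartz estimates for the free Schr\"odinger propagator together with the elementary ``transference'' identity relating $X^{0,b}$ to a Bochner--Sobolev norm in the interaction representation. First I would recall that, for every Schr\"odinger-admissible pair $(q,r)$,
\begin{equation}\label{classStr}
\|e^{it\Delta}\phi\|_{L^q_tL^r_x}\leq C\|\phi\|_{L^2_x}
\end{equation}
(the case $q>2$ goes back to Strichartz and Ginibre--Velo, the endpoint $q=2$ to Keel--Tao, and $(q,r)=(\infty,2)$ is just conservation of mass; see also \cite{T}). Write $U(t)=e^{it\Delta}$ for the free propagator, so that $U(t)u_0$ solves $iu_t+\Delta u=0$ and the space--time Fourier transform of $U(t)u_0$ is supported on $\{\tau+|\xi|^2=0\}$, in accordance with the weight defining the norm $\|f\|_{X^{0,b}}=\|\langle\tau+|\xi|^2\rangle^b\widehat f\|_{L^2_{\tau,\xi}}$. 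Note that \eqref{classStr} holds verbatim with $e^{it\Delta}$ replaced by $e^{-it\Delta}=U(-t)$.

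Next I would pass to the interaction representation. Setting $g(t)=U(-t)f(t)$, a direct computation with the space--time Fourier transform gives $\widehat g(\tau,\xi)=\widehat f(\tau-|\xi|^2,\xi)$, so after the change of variables $\tau\mapsto\tau+|\xi|^2$,
$$\|f\|_{X^{0,b}}=\big\|\langle\tau\rangle^b\widehat g(\tau,\xi)\big\|_{L^2_{\tau,\xi}}=\|g\|_{H^b_tL^2_x}.$$
Writing $g$ via temporal Fourier inversion, $g(t)=\tfrac1{2\pi}\int e^{it\tau}\widetilde g(\tau)\,d\tau$ with $\widetilde g(\tau)\in L^2_x$ for a.e.\ $\tau$, and using $f(t)=U(t)g(t)$, we obtain $f(t)=\tfrac1{2\pi}\int e^{it\tau}U(t)\widetilde g(\tau)\,d\tau$. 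Then Minkowski's inequality (legitimate since $q\geq1$), together with $|e^{it\tau}|=1$ and \eqref{classStr} applied to the fixed datum $\widetilde g(\tau)$, yields
$$\|f\|_{L^q_tL^r_x}\leq\frac1{2\pi}\int\big\|U(t)\widetilde g(\tau)\big\|_{L^q_tL^r_x}\,d\tau\leq C\int\|\widetilde g(\tau)\|_{L^2_x}\,d\tau.$$

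Finally, I would close the estimate by Cauchy--Schwarz in $\tau$: since $b>1/2$, the weight $\langle\tau\rangle^{-b}$ belongs to $L^2_\tau$, whence
$$\int\|\widetilde g(\tau)\|_{L^2_x}\,d\tau\leq\big\|\langle\tau\rangle^{-b}\big\|_{L^2_\tau}\big\|\langle\tau\rangle^b\|\widetilde g(\tau)\|_{L^2_x}\big\|_{L^2_\tau}=C_b\,\|g\|_{H^b_tL^2_x}=C_b\|f\|_{X^{0,b}},$$
which is \eqref{stricartz_2}. The only substantive ingredient is the classical estimate \eqref{classStr}; everything else is the routine transference from the free evolution to the Bourgain space, and the one point requiring a little care is the interchange of the $\tau$-integral with the $L^q_tL^r_x$-norm, which is precisely where the hypotheses $q\geq1$ (automatic for admissible pairs) and $b>1/2$ enter. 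I therefore do not anticipate any genuine obstacle here beyond bookkeeping with the sign conventions hidden in the definition of $U(t)$ and the restriction/extension properties of $X^{0,b}$.
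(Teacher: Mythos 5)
Your argument is correct and is precisely the standard transference-principle proof (superposition of modulated free solutions plus Cauchy--Schwarz in $\tau$ using $b>1/2$) that the paper implicitly invokes by citing \cite{T}, where this lemma is proved in exactly this way. No gaps; the only points needing care (Minkowski's integral inequality and the sign conventions in the change of variables $\tau\mapsto\tau+|\xi|^2$) are handled correctly.
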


Next, consider the linear Cauchy problem for the Schr\"odinger equation
$$
\begin{cases}iu_t+\Delta u = F(t,x), \\
u(0,x)=f(x).
\end{cases}
$$
By Duhamel's principle the solution can be then written as
\begin{equation}\label{DF}
u(t,x)=e^{it\Delta}f(x)-i\int_0^t e^{i(t-s)\Delta}F(s,\cdot)ds,
\end{equation}
where the Fourier multiplier $e^{it\Delta}$ with symbol $e^{-it|\xi|^2}$ is given by
$$e^{it\Delta}f(x)=\frac1{(2\pi)^d}\int_{\mathbb{R}^d}e^{ix\cdot\xi}e^{-it|\xi|^2}\widehat{f}(\xi)d\xi.$$
Then the following is the standard energy estimate in $X_\delta^{s,b}$-spaces (see e.g. \cite{KPV,Te}).
\begin{lem}\label{lem1}
Let $\sigma \geq 0$, $s\in\mathbb{R}$, $1/2<b\leq 1$ and $0<\delta\leq 1$.
Then we have
$$\| e^{it\Delta}f\|_{X^{\sigma,s,b}_\delta} \leq C\|f\|_{G^{\sigma,s}}$$
and
$$\bigg\|\int^t_0 e^{i(t-s)\Delta}F(s,\cdot)ds \bigg\|_{X^{\sigma,s,b}_\delta} \leq C\| F\|_{X^{\sigma,s,b-1}_\delta}.$$
Here the constant $C>0$ depends only on $b$.
\end{lem}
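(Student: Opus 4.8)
The plan is to remove the analytic weight $e^{\sigma\|D\|}$ and then invoke the classical energy estimates for the free Schr\"odinger group in Bourgain spaces. Since $e^{\sigma\|D\|}$ is a Fourier multiplier in the spatial variable only, it commutes with $e^{it\Delta}$ and with integration in $t$, and by definition $\|g\|_{X^{\sigma,s,b}}=\|e^{\sigma\|D\|}g\|_{X^{s,b}}$ together with the analogous identity for the restriction norms, while $\|f\|_{G^{\sigma,s}}=\|e^{\sigma\|D\|}f\|_{H^s}$ (recall $G^{0,s}=H^s$). Hence, setting $\tilde f=e^{\sigma\|D\|}f$ and $\tilde F=e^{\sigma\|D\|}F$ and applying $e^{\sigma\|D\|}$ to both sides of each claimed inequality, everything reduces to the case $\sigma=0$, namely
$$\|e^{it\Delta}\tilde f\|_{X^{s,b}_\delta}\leq C\|\tilde f\|_{H^s}\qquad\text{and}\qquad\Big\|\int_0^t e^{i(t-s)\Delta}\tilde F(s)\,ds\Big\|_{X^{s,b}_\delta}\leq C\|\tilde F\|_{X^{s,b-1}_\delta}.$$
From here on I would work with $\sigma=0$ and drop the tildes.

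For the homogeneous estimate, fix $\psi\in C_c^\infty(\mathbb{R})$ with $\psi\equiv1$ on $[-1,1]$; since $0<\delta\leq1$, the function $\psi(t)e^{it\Delta}f$ agrees with $e^{it\Delta}f$ on $(0,\delta)$ and is therefore a competitor in the definition of the restriction norm, so $\|e^{it\Delta}f\|_{X^{s,b}_\delta}\leq\|\psi(t)e^{it\Delta}f\|_{X^{s,b}}$. A one-line computation shows that the space-time Fourier transform of $\psi(t)e^{it\Delta}f$ is (a constant times) $\widehat\psi(\tau+|\xi|^2)\widehat f(\xi)$, so after the substitution $\tau\mapsto\tau-|\xi|^2$ in the modulation integral,
$$\|\psi(t)e^{it\Delta}f\|_{X^{s,b}}^2=\Big(\int_{\mathbb{R}}\langle\tau\rangle^{2b}|\widehat\psi(\tau)|^2\,d\tau\Big)\,\|f\|_{H^s}^2,$$
and the $\tau$-integral is finite since $\widehat\psi$ is Schwartz. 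This proves the first bound.

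The inhomogeneous estimate is the only genuinely nontrivial point, and it is where I expect the main obstacle to lie. Using the definition of the restriction norm, pick an extension $G\in X^{s,b-1}(\mathbb{R}^{1+d})$ of $F$ with $\|G\|_{X^{s,b-1}}\leq2\|F\|_{X^{s,b-1}_\delta}$; since on $(0,\delta)$ the Duhamel integral only sees $F$ on $(0,\delta)$, it coincides there with $\psi(t)\int_0^t e^{i(t-s)\Delta}G(s)\,ds$, so it suffices to bound the $X^{s,b}$-norm of the latter by $C\|G\|_{X^{s,b-1}}$. This is the standard energy inequality in Bourgain spaces (as in \cite[\S2.6]{T} or \cite{KPV}): one decomposes $G$ dyadically in the modulation $\langle\tau+|\xi|^2\rangle$. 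For the pieces of modulation $\gg1$, the Duhamel integral equals, up to a free solution, the Fourier multiplier $1/(i(\tau+|\xi|^2))$ applied to $G$, which maps $X^{s,b-1}$ into $X^{s,b}$ with a gain in the modulation exponent that makes the dyadic sum summable; the correcting free solution has data of $H^s$-norm at most $C\|G\|_{X^{s,b-1}}$ (here $b>1/2$ is used, to make a Cauchy--Schwarz integral in $\tau$ converge), after which the homogeneous estimate applies. For the low-modulation piece, conjugating by $e^{-it\Delta}$ reduces the matter to showing $\|\psi(t)\int_0^t h(s)\,ds\|_{H^b_t}\leq C\|h\|_{L^2_t}$ for $h$ band-limited in $t$, which follows by interpolating the elementary $b=0$ and $b=1$ estimates. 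Summing the dyadic contributions completes the argument; the whole difficulty is concentrated in this modulation analysis, everything else being bookkeeping.
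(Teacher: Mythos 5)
Your reduction to $\sigma=0$ by commuting the multiplier $e^{\sigma\|D\|}$ with $e^{it\Delta}$ and with the restriction norms is exactly the route the paper takes, which then simply cites the standard energy estimate in $X^{s,b}_\delta$ from \cite{KPV,Te} rather than proving it. Your additional sketch of that classical estimate (the cutoff computation for the homogeneous part and the modulation decomposition for the Duhamel term) is the standard argument and is correct in outline, so the proposal matches the paper's proof.
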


\section{Multilinear estimates in Gevrey-Bourgain spaces}\label{sec3}
In this section we obtain a couple of multilinear estimates in Gevrey-Bourgain spaces, Proposition \ref{lem6},
which will play a key role in obtaining the local well-posedness (Theorem \ref{thm2})
and almost conservation law (Theorem \ref{thm3}) in the next section.
With the aid of Proposition \ref{lem6}, we also deduce two more estimates
which are also important in obtaining the almost conservation law.

From now on, $p$ will always denote an odd integer greater than $3$.
For the sake of brevity, we also let

\begin{equation}\label{s0s1}
s_0 =
\begin{cases}
\frac{p-3}{2(p-1)}&\quad\text{if}\quad d=1,\\
\frac{p-2}{p-1}&\quad\text{if}\quad d=2,
\end{cases}\\
\quad\text{and}\quad
\ s_1=\begin{cases}\frac{p-5}{2(p-2)}&\quad\text{if}\quad d=1,\\
\frac{p-3}{p-2}&\quad\text{if}\quad d=2.
\end{cases}
\end{equation}

\begin{prop}\label{lem6}
Let $d=1,2$, $\sigma \geq 0$ and $b>1/2$.
Then we have
\begin{equation}\label{lemma_estimate_2}
\bigg\|\prod_{j=1}^p U_j \bigg\|_{L^{2}_{t,x}} \leq C_{p,b}\prod_{j=1}^{p-1} \| u_j \|_{X^{s_0,b}}
\| u_p\|_{X^{0,b}},
\end{equation}
\begin{equation}\label{lemma_estimate_1}
\bigg\| \prod_{j=1}^p U_j \bigg\|_{X^{0,-b}} \leq C_{p,b}\prod_{j=1}^{p-2} \| u_j \|_{X^{s_1,b}}
\|u_{p-1}\|_{X^{0,b}} \|u_p\|_{X^{0,b}}
\end{equation}
and
\begin{equation}\label{lemma_estimate_3}
\bigg\|\prod_{j=1}^p U_j \bigg\|_{X^{\sigma,1,0}} \leq C_{p,b}\prod_{j=1}^p \| u_j \|_{X^{\sigma,1,b}},
\end{equation}
where $U_j$ denotes $u_j$ or $\overline{u_j}$.
\end{prop}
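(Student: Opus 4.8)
The plan is to deduce all three estimates from Hölder's inequality in the space-time variables, combined with the Strichartz estimate \eqref{stricartz_2} and Sobolev embedding in $x$; the exponents $s_0,s_1$ of \eqref{s0s1} are precisely what makes the resulting count of derivatives balance. I begin with two harmless reductions. Since $|\overline{u_j}(t,x)|=|u_j(t,x)|$ pointwise and $|\widehat{\overline{u_j}}(\tau,\xi)|=|\widehat{u_j}(-\tau,-\xi)|$, while \eqref{stricartz_2} (and, by duality, its adjoint) is unchanged under complex conjugation --- equivalently it holds verbatim for the Bourgain space modelled on $\tau=|\xi|^2$ --- one may run the whole argument as though every $U_j=u_j$, the conjugations only selecting which of $\langle\tau\pm|\xi|^2\rangle$ is the relevant weight for a given factor when \eqref{stricartz_2} is applied to it, at no cost. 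Secondly, since $\langle\xi\rangle^{s}\le\langle\xi\rangle$ for $s\le1$ one has trivially $\|f\|_{X^{\sigma,s,b}}\le\|f\|_{X^{\sigma,1,b}}$, and $s_0<1$ by \eqref{s0s1}; hence in \eqref{lemma_estimate_3} it will suffice to bound the right-hand side by a product in which the $p-1$ ``low'' factors are measured in $X^{\sigma,s_0,b}$.

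For \eqref{lemma_estimate_2} I would estimate $\|\prod_j U_j\|_{L^2_{t,x}}$ directly: place $u_p$ in a derivative-free admissible Strichartz space $L^{q_1}_tL^{r_1}_x$, with $(q_1,r_1)=(4,\infty)$ when $d=1$ and $(q_1,r_1)=(4,4)$ when $d=2$, and each of $u_1,\dots,u_{p-1}$ in $L^{q_0}_tL^{r_0}_x$ with $q_0=4(p-1)$ and $r_0$ determined by the requirement that Hölder close to $L^2_{t,x}$ (so $r_0=2(p-1)$ if $d=1$ and $r_0=4(p-1)$ if $d=2$). By Sobolev embedding $\langle D\rangle^{-s_0}\colon L^{\rho_0}_x\to L^{r_0}_x$, where $(q_0,\rho_0)$ is Schr\"odinger-admissible, followed by \eqref{stricartz_2}, this gives $\|u_j\|_{L^{q_0}_tL^{r_0}_x}\lesssim\|u_j\|_{X^{s_0,b}}$ and, directly, $\|u_p\|_{L^{q_1}_tL^{r_1}_x}\lesssim\|u_p\|_{X^{0,b}}$; the Sobolev relation $s_0=d(1/\rho_0-1/r_0)$ together with the admissibility identity $2/q_0+d/\rho_0=d/2$ returns exactly the $s_0$ of \eqref{s0s1}. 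Estimate \eqref{lemma_estimate_1} is the same scheme read through duality: for any admissible $(q,r)$ one has $\|F\|_{X^{0,-b}}\lesssim\|F\|_{L^{q'}_tL^{r'}_x}$ (the dual of \eqref{stricartz_2}), so taking $(q,r)=(\infty,2)$ for $d=1$ and $(q,r)=(4,4)$ for $d=2$ and applying Hölder with $u_{p-1},u_p$ in a derivative-free admissible space and $u_1,\dots,u_{p-2}$ in admissible spaces after trading $s_1$ derivatives through Sobolev, the analogous exponent computation returns exactly the $s_1$ of \eqref{s0s1}.

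For \eqref{lemma_estimate_3}, observe first that the $\tau$-weight on the left is trivial, so $\|\,\cdot\,\|_{X^{\sigma,1,0}}=\|e^{\sigma\|D\|}\langle D\rangle\,\cdot\,\|_{L^2_{t,x}}$. On the Fourier side, if $\xi=\xi_1+\dots+\xi_p$ then $e^{\sigma\|\xi\|}\le\prod_j e^{\sigma\|\xi_j\|}$ by the triangle inequality for $\|\cdot\|$, and $\langle\xi\rangle\le\sqrt2\sum_k\langle\xi_k\rangle$; distributing $e^{\sigma\|D\|}\langle D\rangle$ across the product accordingly (and using the first reduction) yields
\[
\Big\|\prod_{j=1}^p U_j\Big\|_{X^{\sigma,1,0}}\ \lesssim\ \sum_{k=1}^p\Big\|\,\langle D\rangle v_k^\sigma\prod_{j\ne k}v_j^\sigma\,\Big\|_{L^2_{t,x}},\qquad v_j^\sigma:=\mathcal F^{-1}\big(e^{\sigma\|\xi\|}|\widehat{u_j}|\big),
\]
and $\|v_j^\sigma\|_{X^{s,b}}=\|u_j\|_{X^{\sigma,s,b}}$ for every $s,b$. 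Each summand is now of exactly the form treated in \eqref{lemma_estimate_2}: put $\langle D\rangle v_k^\sigma$ into $L^{q_1}_tL^{r_1}_x$ with no Sobolev loss, so $\lesssim\|v_k^\sigma\|_{X^{1,b}}=\|u_k\|_{X^{\sigma,1,b}}$, and the remaining $p-1$ factors into $L^{q_0}_tL^{r_0}_x$ at the cost of $s_0$ derivatives each, so $\lesssim\|v_j^\sigma\|_{X^{s_0,b}}=\|u_j\|_{X^{\sigma,s_0,b}}\le\|u_j\|_{X^{\sigma,1,b}}$; summing in $k$ absorbs a factor $p$ into $C_{p,b}$.

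I expect the main obstacle to be the exponent bookkeeping: the admissible pairs must be chosen so that the Sobolev loss is exactly $s_0$ (respectively $s_1$), and for \eqref{lemma_estimate_3} one needs in addition $s_0\le1$ so that the one available derivative $\langle D\rangle$ can be loaded onto a single factor. The delicate case is $d=2$, where the regularity $s=1$ is the borderline $d/2$: then $G^{\sigma,1}(\mathbb R^2)$ is not a multiplication algebra and no fixed-time product estimate is available, so the time integration built into \eqref{stricartz_2} must genuinely be used. One should also keep track of the hypothesis $b>1/2$, which is what legitimises the endpoint admissible spaces $L^\infty_tL^2_x$ and (for $d=1$) $L^4_tL^\infty_x$, as well as the duality $L^1_tL^2_x\hookrightarrow X^{0,-b}$ invoked for \eqref{lemma_estimate_1}.
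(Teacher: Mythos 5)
Your proposal is correct and follows essentially the same route as the paper: H\"older's inequality plus Sobolev embedding plus the Strichartz estimate \eqref{stricartz_2} (read through duality for \eqref{lemma_estimate_1}), with admissible pairs tuned so the derivative losses are exactly $s_0$ and $s_1$, and for \eqref{lemma_estimate_3} a Fourier-side Leibniz distribution of $\langle D\rangle$ combined with nonnegative Fourier majorants and $e^{\sigma\|\xi\|}\leq\prod_{j}e^{\sigma\|\xi_j\|}$ to reduce to \eqref{lemma_estimate_2}. The only cosmetic differences are your particular choice of admissible pairs when $d=1$ and your explicit handling of the complex conjugations (via the $\langle\tau\pm|\xi|^2\rangle$ weights), which the paper leaves implicit.
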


\begin{proof}
First we prove \eqref{lemma_estimate_2}.
By duality, it is enough to prove
$$\int(\prod^{p}_{j=1}U_j)\cdot\overline{u_{p+1}}\ dtdx \leq C_{p,b}(\prod^{p-1}_{j=1}\| u_j\|_{X^{s_0,b}})\| u_p\|_{X^{0,b}}\| u_{p+1}\|_{L^2_{t,x}}.$$
By H\"older's inequailty, the Sobolev embedding and then \eqref{stricartz_2}, we get
\begin{align*}
\int (\prod^{p}_{j=1} U_j)\cdot \overline{u_{p+1}} \ dtdx &\leq (\prod^{p-1}_{j=1} \| u_j\|_{L^{\frac{8}{3}(p-1)}_t L^{4(p-1)}_x})\| u_p\|_{{L^8_t}{L^4_x}}\| u_{p+1} \|_{L^2_{t,x}}\\
&\leq  (\prod^{p-1}_{j=1} \| \langle D\rangle^{\frac{p-3}{2(p-1)}}u_j\|_{L^{4(p-1)}_tL^{\frac{4(p-1)}{2p-5}}_x})\| u_p\|_{{L^8_t}{L^4_x}}\| u_{p+1} \|_{L^2_{t,x}}\\
&\leq C_{p,b}(\prod^{p-1}_{j=1} \| u_j \|_{X^{s_0,b}})\| u_p \|_{X^{0,b}}\| u_{p+1} \|_{L^2_{t,x}}
\end{align*}
for $d=1$, and for $d=2$
\begin{align*}
\int (\prod^{p}_{j=1} U_j)\cdot \overline{u_{p+1}} \ dtdx &\leq (\prod^{p-1}_{j=1} \| u_j\|_{L^{4(p-1)}_{t,x}})\| u_p\|_{L^4_{t,x}}\| u_{p+1} \|_{L^2_{t,x}}\\
&\leq  (\prod^{p-1}_{j=1} \| \langle D\rangle^{\frac{p-2}{p-1}}u_j\|_{L^{4(p-1)}_tL^{\frac{4(p-1)}{2p-3}}_x})\| u_p\|_{L^4_{t,x}}\| u_{p+1} \|_{L^2_{t,x}}\\
&\leq C_{p,b}(\prod^{p-1}_{j=1} \| u_j \|_{X^{s_0,b}})\| u_p \|_{X^{0,b}}\| u_{p+1} \|_{L^2_{t,x}},
\end{align*}
as desired.

Next, we prove the second estimate \eqref{lemma_estimate_1}.
By duality we first observe that
\begin{align*}
\quad \| \prod^{p}_{j=1} U_j\|_{X^{0,-b}} &=\| \langle \tau + | \xi |^2 \rangle^{-b}\widehat{\prod^{p}_{j=1}U_j}\|_{L^2_{\tau,\xi}}\\
&=\sup_{\| u^\prime\|_{L^2_{\tau,\xi}}=1}\int \widehat{\prod^{p}_{j=1}U_j\langle} \tau + | \xi |^2 \rangle^{-b}u^\prime \ d\tau d\xi \\
&=\sup_{\| u_{p+1}\|_{X^{0,b}}=1} \int \widehat{\prod^p_{j=1}U_j}\cdot\overline{\widehat{u_{p+1}}} \ d\tau d\xi\ \\
&= \sup_{\| u_{p+1}\|_{X^{0,b}}=1} \int \prod^p_{j=1}U_j\cdot\overline{u_{p+1}} \ dt dx
\end{align*}
where $\overline{\widehat{u_{p+1}}}=\langle \tau + | \xi |^2\rangle^{-b} u^\prime$.
Hence it suffices to prove
$$\int\prod^{p}_{j=1}U_j\cdot\overline{u_{p+1}}\ dtdx \leq C_{p,b}(\prod^{p-2}_{j=1} \| u_j \|_{X^{s_1,b}})(\prod^{p+1}_{j=p-1}\| u_j \|_{X^{0,b}}).$$
Again by H\"older's inequailty, the Sobolev embedding and \eqref{stricartz_2}, we get
\begin{align*}
\int \prod^{p}_{j=1} U_j \cdot \overline{u_{p+1}} \ dt dx &\leq (\prod^{p-2}_{j=1} \| u_j \|_{L^{2(p-2)}_{t,x}})(\prod^{p+1}_{j=p-1} \| u_j \|_{L^{6}_{t,x}}) \\
&\leq (\prod^{p-2}_{j=1} \| \langle D \rangle^{\frac{p-5}{2(p-2)}}u_j \|_{L^{2(p-2)}_t L^{\frac{2(p-2)}{p-4}}_x})(\prod^{p+1}_{j=p-1}\| u_j \|_{L^6_{t,x}})\\
&\leq C_{p,b}(\prod^{p-2}_{j=1} \| u_j \|_{X^{s_1,b}})(\prod^{p+1}_{j=p-1}\| u_j \|_{X^{0,b}})
\end{align*}
for $d=1$, and for $d=2$
\begin{align*}
\int \prod^{p}_{j=1} U_j \cdot \overline{u_{p+1}} \ dt dx &\leq (\prod^{p-2}_{j=1} \| u_j \|_{L^{4(p-2)}_{t,x}})(\prod^{p+1}_{j=p-1} \| u_j \|_{L^{4}_{t,x}}) \\
&\leq (\prod^{p-2}_{j=1} \| \langle D \rangle^{\frac{p-3}{p-2}}u_j \|_{L^{4(p-2)}_t L^{\frac{4(p-2)}{2p-5}}_x})(\prod^{p+1}_{j=p-1}\| u_j \|_{L^4_{t,x}})\\
&\leq C_{p,b}(\prod^{p-2}_{j=1} \| u_j \|_{X^{s_1,b}})(\prod^{p+1}_{j=p-1}\| u_j \|_{X^{0,b}}),
\end{align*}
as desired.

Finally, we prove \eqref{lemma_estimate_3}.
First we consider the case $\sigma=0$. We note that
\begin{align*}
\| f \|_{X^{0,1,0}}^2&=\| \langle\xi\rangle\widehat{f}(\tau,\xi)\|_{L^2_{\tau,\xi}}^2\\
&=\int |\sqrt{1+|\xi |^2}\widehat{f}(\tau,\xi)|^2\ d\tau d\xi\\
&=\int |\widehat{f}(\tau,\xi)|^2\ d\tau d\xi+\int |\xi|^2|\widehat{f}(\tau,\xi)|^2\ d\tau d\xi\\
&=\big\|\widehat{f}(\tau,\xi)\big\|_{L^2_{\tau,\xi}}^2+\big\| |\xi|\widehat{f}(\tau,\xi)\big\|_{L^2_{\tau,\xi}}^2.\\
\end{align*}
Hence it is enough to show that
\begin{equation*}
\bigg\|\prod_{j=1}^p U_j \bigg\|_{L_{t,x}^2} \leq C_{p,b}\prod_{j=1}^p \| u_j \|_{X^{1,b}}
\end{equation*}
and by symmetry
$$\bigg\| \prod^{p-1}_{j=1} U_j\cdot(\partial_{x_1}U_p)\bigg\|_{L^2_{t,x}}\leq C_{p,b}\prod^{p}_{j=1} \| u_j\|_{X^{1,b}},$$
which are direct consequences of \eqref{lemma_estimate_2}.
For instance,
\begin{align*}
\bigg\| \prod^{p-1}_{j=1} U_j\cdot(\partial_{x_1}U_p)\bigg\|_{L^2_{t,x}} &\leq \bigg(\prod^{p-1}_{j=1} \| u_j \|_{X^{s_0,b}}\bigg)
\| \partial_{x_1}u_p\|_{X^{0,b}}\\
&\leq \bigg(\prod^{p-1}_{j=1} \| u_j \|_{X^{1,b}}\bigg)\| u_p\|_{X^{1,b}}.
\end{align*}
Note here that $s_0<1$.
Now we consider the case $\sigma >0$.
Without loss of generality, we may assume $U_k =u_k$ for each $k\in\left\{1,2,\cdots,p\right\}$.
Let $v_j = e^{\sigma\|D\|}u_j$. Then \eqref{lemma_estimate_3} reduces to
\begin{equation*}
\bigg\| e^{\sigma\|D\|}\bigg(\prod^{p}_{j=1} e^{-\sigma\|D\|}v_j\bigg)\bigg\|_{X^{1,0}}\leq C_{p,b}\prod^{p}_{j=1}\| v_j\|_{X^{1,b}}.
\end{equation*}
We first write the space-time Fourier transform of
$$e^{\sigma\|D\|}\bigg(\prod^{p}_{j=1} e^{-\sigma\|D\|}v_j\bigg)$$
as
$$\int_\ast \left\{e^{-\sigma(\sum_{j=1}^p \| \xi_j \| - \| \xi \|)} \right\}\prod_{j=1}^{p}\widehat{v}_j(\tau_j,\xi_j)
\ d\tau_1d\xi_1\cdots d\tau_{p-1}d\xi_{p-1}$$
where we used $\ast$ to denote the conditions $\tau=\sum_{j=1}^p\tau_j$ and $\xi=\sum_{j=1}^p\xi_j$.
In fact, observe that
\begin{align*}
\widehat{v_1v_2v_3}(\tau,\xi) &= \widehat{v_1} * \widehat{v_2v_3}(\tau,\xi)\\
&= \int \widehat{v_1}(\tau_1,\xi_1)\widehat{v_2v_3}(\tau-\tau_1,\xi-\xi_1)\ d\tau_1d\xi_1\\
&= \int \widehat{v_1}(\tau_1,\xi_1)\widehat{v_2}(\tau_2,\xi_2)\widehat{v_3}(\tau-\tau_1-\tau_2,\xi-\xi_1-\xi_2) \ d\tau_2d\xi_2d\tau_1d\xi_1.
\end{align*}
Now letting $\tau_3=\tau-\tau_1-\tau_2$ and $\xi_3=\xi-\xi_1-\xi_2$, the last expression equals to
$$
\int \widehat{v_1}(\tau_1,\xi_1)\widehat{v_2}(\tau_2,\xi_2)\widehat{v_3}(\tau_3,\xi_3) \ d\tau_1d\xi_1d\tau_2d\xi_2.
$$
A similar argument can be made for multiplications of four or more terms.

Let $\widehat{w_j}(\tau,\xi)=| \widehat{v_j}(\tau,\xi)|$. Then using the fact that $e^{\sigma(\| \xi\|-\sum^{p}_{j=1}\| \xi_j\|)} \leq1$,
which follows from the triangle inequality, we obtain
\begin{align*}
\bigg\| e^{\sigma\|D\|}\bigg(\prod^{p}_{j=1} e^{-\sigma\|D\|}v_j\bigg)\bigg\|_{X^{1,0}}
 &\leq \bigg\| \int \langle \xi \rangle \prod^{p}_{j=1} |\widehat{v_j}(\tau_j,\xi_j) | \ d\tau_1d\xi_1\cdots d\tau_{p-1}d\xi_{p-1}\bigg\|_{L^2_{\tau,\xi}}\\
&\leq \bigg\| \int \langle \xi \rangle \prod^{p}_{j=1} \widehat{w_j}(\tau_j,\xi_j) \ d\tau_1d\xi_1\cdots d\tau_{p-1}d\xi_{p-1}\bigg\|_{L^2_{\tau,\xi}}\\
&=\bigg\| \prod^p_{j=1} w_j\bigg\|_{X^{1,0}}.
\end{align*}
Applying the above case $\sigma=0$ to the last expression, we get
\begin{align*}
\bigg\| \prod^p_{j=1} w_j\bigg\|_{X^{1,0}}&\leq C_{p,b}\prod^p_{j=1} \| w_j \|_{X^{1,b}}\\
&=C_{p,b}\prod^p_{j=1} \| v_j \|_{X^{1,b}}.
\end{align*}
Now we have the desired result.
\end{proof}

With the aid of Proposition \ref{lem6}, we deduce two more estimates,
which, along with the function $f$ defined here, will play a crucial role in obtaining the almost conservation law, Theorem \ref{thm3},
in the next section.
\begin{lem}
 Let $d=1,2$ and
\begin{equation}\label{f(v)}
f(v)=-\left\{ | v|^{p-1}v - e^{\sigma \|D\|}\Big(\big| e^{-\sigma \|D\|}v\big|^{p-1}e^{-\sigma \|D\|}v\Big)\right\}.
\end{equation}
For all $b> 1/2$ we then have
\begin{equation} \label{lemma_estimate_4}
\big\| \overline{f(v)}\big\|_{L^{2}_{t,x}}\leq C_{p,b}\sigma \| v \|^{p-1}_{X^{s_0,b}}\| v \|_{X^{1,b}}
\end{equation}
and
\begin{equation}\label{lemma_estimate_5}
\big\| \overline{\triangledown f(v)} \big\|_{X^{0,-b}}\leq C_{p,b}\sigma \| v \|^{p-2}_{X^{s_1,b}}
\| v \|^{2}_{X^{1,b}}.
\end{equation}
\end{lem}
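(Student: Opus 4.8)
The plan is to exploit the multilinear structure hidden in $f(v)$. Since $p$ is odd, $|v|^{p-1}v=v^{(p+1)/2}\,\overline{v}^{(p-1)/2}$; writing $V_j$ for the corresponding factor ($v$ or $\overline{v}$) and using that $e^{\pm\sigma\|D\|}$ has a real even symbol and hence commutes with complex conjugation, one computes
$$
\widehat{f(v)}(\tau,\xi)=-\int_\ast m(\xi_1,\dots,\xi_p)\prod_{j=1}^{p}\widehat{V_j}(\tau_j,\xi_j)\,d\tau_1 d\xi_1\cdots d\tau_{p-1}d\xi_{p-1},
$$
where $\ast$ denotes $\tau=\sum_j\tau_j$, $\xi=\sum_j\xi_j$, and $m(\xi_1,\dots,\xi_p):=1-e^{-\sigma(\sum_j\|\xi_j\|-\|\xi\|)}$. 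By the triangle inequality $\|\xi\|\le\sum_j\|\xi_j\|$, so $0\le m$, and $1-e^{-y}\le y$ for $y\ge 0$ gives $0\le m\le\sigma\big(\sum_j\|\xi_j\|-\|\xi\|\big)$; I will also use the sharper bound $\sum_j\|\xi_j\|-\|\xi\|\le 2\sum_{j\ne j_0}\|\xi_j\|$, valid whenever $\|\xi_{j_0}\|=\max_j\|\xi_j\|$ (it follows from $\|\xi\|\ge\|\xi_{j_0}\|-\sum_{j\ne j_0}\|\xi_j\|$). All estimates below are carried out after replacing each $\widehat{V_j}$ by its modulus, which turns the $j$-th factor into $G_j$, equal to $h$ or $\overline{h}$ for the fixed function $h$ with $\widehat h=|\widehat v|$ (so $\|h\|_{X^{s,b}}=\|v\|_{X^{s,b}}$ for all $s$); since Proposition~\ref{lem6} tolerates an arbitrary conjugation pattern on its input factors — its right-hand sides always containing only the $X^{s,b}$-norms of the un-conjugated functions — this replacement costs nothing. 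The same reductions apply verbatim to $\overline{f(v)}$ and $\overline{\nabla f(v)}$, whose space-time Fourier transforms have the identical form with the roles of $v$ and $\overline{v}$ interchanged.

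For \eqref{lemma_estimate_4} I use the crude bound $m\le\sigma\sum_j\|\xi_j\|\le C\sigma\sum_j\langle\xi_j\rangle$ together with $m\ge 0$ to dominate $|\widehat{f(v)}(\tau,\xi)|$ pointwise by $C\sigma\sum_{k=1}^p\widehat{\Pi_k}(\tau,\xi)$, where $\Pi_k:=G_1\cdots(\langle D\rangle G_k)\cdots G_p$. Each $\widehat{\Pi_k}$ is nonnegative, so Plancherel's theorem and the triangle inequality give $\|\overline{f(v)}\|_{L^2_{t,x}}=\|f(v)\|_{L^2_{t,x}}\le C\sigma\sum_{k=1}^p\|\Pi_k\|_{L^2_{t,x}}$. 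Applying \eqref{lemma_estimate_2} to each $\Pi_k$, with the factor carrying $\langle D\rangle$ placed in the $X^{0,b}$-slot (contributing $\|\langle D\rangle h\|_{X^{0,b}}=\|v\|_{X^{1,b}}$) and the remaining $p-1$ factors in the $X^{s_0,b}$-slots, gives \eqref{lemma_estimate_4}.

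For \eqref{lemma_estimate_5}, since $\widehat{\nabla f(v)}(\tau,\xi)=i\xi\,\widehat{f(v)}(\tau,\xi)$ one has $\|\nabla f(v)\|_{X^{0,-b}}=\big\||\xi|\langle\tau+|\xi|^2\rangle^{-b}\widehat{f(v)}\big\|_{L^2_{\tau,\xi}}$, and bounding $|\xi|=|\sum_l\xi_l|\le\sum_l|\xi_l|$ reduces matters to estimating, for each $l$, the quantity $\int_\ast|\xi_l|\,m\,\prod_j|\widehat{V_j}|$. The factor $|\xi_l|$ already charges one derivative to the $l$-th slot; using $m\le 2\sigma\sum_{j\ne j_0}\|\xi_j\|$ and the bound $\|\xi_l\|\le\|\xi_{j_0}\|$ (valid when $l\ne j_0$) to steer the derivative coming from $m$ onto a factor \emph{different from} $l$, one obtains the pointwise bound $|\xi_l|\,m\le C\sigma\,\langle\xi_l\rangle\sum_{j\ne l}\langle\xi_j\rangle$. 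Hence $|\xi|\,\langle\tau+|\xi|^2\rangle^{-b}|\widehat{f(v)}|$ is dominated by $C\sigma\sum_l\sum_{j\ne l}\langle\tau+|\xi|^2\rangle^{-b}\widehat{\Pi_{l,j}}$, where $\Pi_{l,j}:=(\langle D\rangle G_l)(\langle D\rangle G_j)\prod_{k\ne l,j}G_k$; taking $L^2_{\tau,\xi}$ norms and applying \eqref{lemma_estimate_1} to each $\Pi_{l,j}$, with the two $\langle D\rangle$-factors in the two $X^{0,b}$-slots, yields \eqref{lemma_estimate_5}.

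The step I expect to be the crux is this routing of derivatives in \eqref{lemma_estimate_5}: the derivative produced by $\nabla$ and the one produced by the Gevrey difference $m$ must be assigned to two \emph{distinct} input factors, for otherwise a single factor would be forced into an $X^{2,b}$-type norm and Proposition~\ref{lem6} would not apply with the stated norms. The inequality $\sum_j\|\xi_j\|-\|\xi\|\le 2\sum_{j\ne j_0}\|\xi_j\|$, which keeps the $m$-derivative off the highest-frequency factor, together with the trivial observation that the highest frequency can always absorb the role of any lower one, is precisely what makes this separation possible.
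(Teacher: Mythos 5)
Your proposal is correct and follows essentially the same route as the paper: expand $f(v)$ on the Fourier side, bound the symbol $1-e^{-\sigma(\sum_j\|\xi_j\|-\|\xi\|)}$ by $\sigma$ times a quantity controlled by the \emph{second}-largest frequency so that the gradient's derivative and the symbol's derivative fall on distinct factors, pass to moduli of the Fourier transforms, and invoke \eqref{lemma_estimate_2} and \eqref{lemma_estimate_1}. The only differences are cosmetic bookkeeping: the paper orders the frequencies and derives the bound $C_p\sigma\|\xi_{p-1}\|$ via the identity $\sum\|\xi_j\|-\|\xi\|=\big((\sum\|\xi_j\|)^2-\|\xi\|^2\big)/\big(\sum\|\xi_j\|+\|\xi\|\big)$, whereas you use the reverse triangle inequality and a sum over pairs, and for \eqref{lemma_estimate_4} you use the cruder bound $\sigma\sum_j\|\xi_j\|$ where the paper reuses the sharper one.
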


\begin{proof}
We first take the space-time Fourier Transform of $\bar{f}$ to see
\begin{align*}
\Big| \widehat{\overline{f(v)}}(&\tau,\xi)\Big|\\
=&\bigg| \int_\ast \left\{1-e^{-\sigma(\sum_{j=1}^p \|\xi_j \| - \| \xi\|)} \right\}\prod_{j=1}^{\frac{p-1}{2}}\widehat{v}(\tau_j,\xi_j)
\overline{\prod_{j={\frac{p+1}{2}}}^p\widehat{v}(\tau_j,\xi_j) } \ d\tau_1d\xi_1\cdots d\tau_{p-1}d\xi_{p-1}\bigg|
\end{align*}
where we used $\ast$ to denote the conditions $\tau=\sum_{j=1}^p\tau_j$ and $\xi=\sum_{j=1}^p\xi_j$.
By symmetry, we may assume $ \| \xi_1 \| \leq \| \xi_2 \|  \leq \cdots \leq \| \xi_p \| $. Then we have
\begin{align*}
1-e^{-\sigma(\sum_{i=1}^p \|\xi_i \| - \| \xi \|)} &\leq \sigma\bigg(\sum_{i=1}^p \| \xi_i \| - \| \xi \|\bigg)\\
&= \sigma\frac{\big(\sum_{i=1}^p \| \xi_i \|\big)^2 - \| \xi \|^2}{\sum_{i=1}^p \| \xi_i \| + \| \xi \|}\\
&\leq C_p\sigma \frac{\| \xi_{p-1} \| \cdot \| \xi_p \|}{\| \xi_p \|} =C_p\sigma\|\xi_{p-1} \|.
\end{align*}
Consequently,
$$\Big| \widehat{\overline{f(v)}}(\tau,\xi)\Big| \leq\int C_p\sigma\|\xi_{p-1}\| \prod_{j=1}^p|\widehat{v}(\tau_j,\xi_j)| \ d\tau_1d\xi_1\cdots d\tau_{p-1}d\xi_{p-1}.$$
Let $ \widehat{w}_i(\tau_i,\xi_i) = | \widehat{v}(\tau_i,\xi_i) |$.
We then use \eqref{lemma_estimate_2} to obtain
\begin{align*}
\big\| \overline{f(v)} \big\|_{L^2_{t,x}}&= \Big\| \widehat{\overline{f(v)}}(\tau,\xi)\Big\|_{L^2_{\tau,\xi}} \\
&\leq C_p\sigma \bigg\lVert \int \langle \xi_p \rangle\prod_{j=1}^p\widehat{w_j}(\tau_j,\xi_j)\ d\tau_1d\xi_1 \cdots d\tau_{p-1}d\xi_{p-1}\bigg\rVert_{L^2_{\tau,\xi}}\\
&=C_p\sigma \bigg\lVert\prod_{j=1}^{p-1}w_j\cdot \langle D \rangle w_p \bigg\rVert_{L^2_{t,x}}\\
&\leq C_{p,b}\sigma\prod_{j=1}^{p-1}\lVert w_j \rVert_{X^{s_0,b}}\big\lVert \langle D \rangle w_p \big\rVert_{X^{0,b}}\\
&=C_{p,b}\sigma \lVert v \rVert^{p-1}_{X^{s_0,b}}\lVert v \rVert_{X^{1,b}}.
\end{align*}
Here we used the fact that $\|\xi_p\|\leq2|\xi_p|\leq2\langle\xi_p\rangle$ for the first inequality.
Similarly, we use \eqref{lemma_estimate_1} to obtain
\begin{align*}
\big\|\overline{\triangledown f(v)} \big\|_{X^{0,-b}} &= \big\| |\xi| \langle \tau + |\xi|^2 \rangle^{-b}\widehat{\overline{f(v)}}(\tau,\xi)\big\|_{L^2_{\tau,\xi}}\\
&\leq C_p\sigma\bigg\| |\xi| \langle \tau + |\xi|^2 \rangle^{-b}
\int \langle \xi_{p-1} \rangle\prod_{j=1}^p\widehat{w}(\tau_j,\xi_j)d\tau_1d\xi_1 \cdots d\tau_{p-1}d\xi_{p-1}\bigg\|_{L^2_{\tau,\xi}}\\
&\leq C_p\sigma\bigg\| \langle \tau + |\xi|^2 \rangle^{-b} \int \langle \xi_{p-1} \rangle \langle \xi_p \rangle\prod_{j=1}^p\widehat{w}(\tau_j,\xi_j)\ d\tau_1d\xi_1 \cdots d\tau_{p-1}d\xi_{p-1}\bigg\|_{L^2_{\tau,\xi}}\\
&=C_p\sigma\bigg\lVert\prod_{j=1}^{p-2}w_j\cdot \langle D \rangle w_{p-1} \cdot \langle D \rangle w_p \bigg\rVert_{X^{0,-b}}\\
&\leq C_{p,b}\sigma\prod_{j=1}^{p-2}\lVert w_j \rVert_{X^{s_1,b}}\big\lVert \langle D \rangle w_{p-1} \big\rVert_{X^{0,b}} \big\lVert \langle D \rangle w_p \big\rVert_{X^{0,b}}\\
&=C_{p,b}\sigma \lVert v \rVert^{p-2}_{X^{s_1,b}}\lVert v \rVert^{2}_{X^{1,b}}\textrm{.}
\end{align*}
Here we used, for the second inequality, that
$$|\xi|\leq\|\xi\|\leq\sum_{j=1}^p\|\xi_j\|\leq p\|\xi_p\|\leq2p|\xi_p|\leq2p\langle\xi_p\rangle.$$
\end{proof}

\section{Local well-posedness and almost conservation law}\label{sec4}

In this section we shall establish the local well-posedness in Subsection \ref{subsec4.1} and the almost conservation law
in Subsection \ref{subsec4.2} by making use of the multilinear estimates obtained in the previous section.

\subsection{Local well-posedness}\label{subsec4.1}

Based on Picard's iteration in the $X_\delta^{\sigma,1,b}$-space and Lemma \ref{lem00},
we establish the following local well-posedness in $G^{\sigma, 1}$, with a lifespan $\delta > 0$.
Equally the radius of analyticity remains strictly positive in a short time interval $0 \leq t \leq\delta$,
where $\delta > 0$ depends on the norm of the initial data.
We use the notation $A\pm=A\pm\epsilon$ for sufficiently small $\epsilon>0$.

\begin{thm}\label{thm2}
Let $d=1, 2$, $\sigma>0$ and $b=\frac{1}{2}+$.
Then, for any $u_0 \in G^{\sigma,1}$, there exist $\delta > 0$ and a unique solution u of the Cauchy problem \eqref{NLS} on the time interval $[0,\delta]$ such that $u \in C([0,\delta],G^{\sigma,1})$ and the solution depends continuously on the data $u_0$.
Furthermore, we have
$$ \delta = c_0(1+\| u_0 \|_{G^{\sigma,1}})^{-(2(p-1)-)}$$
for some constant $c_0>0$ depending only on $p$,
and the solution u satisfies
\begin{equation}\label{lowe}
\| u \|_{X^{\sigma,1,b}_\delta} \leq C_b\| u_0 \|_{G^{\sigma,1}}.
\end{equation}
\end{thm}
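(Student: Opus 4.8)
The plan is to run a standard Picard iteration / contraction-mapping argument in the space $X^{\sigma,1,b}_\delta$ with $b = \frac12+$, applied to the Duhamel operator
$$
\Phi(u)(t) = e^{it\Delta}u_0 - i\int_0^t e^{i(t-s)\Delta}\big(|u|^{p-1}u\big)(s)\,ds
$$
associated with \eqref{NLS} via \eqref{DF}. First I would fix $b' = b-1 = -\frac12+$ and use Lemma \ref{lem1} to bound
$$
\|\Phi(u)\|_{X^{\sigma,1,b}_\delta} \le C_b\|u_0\|_{G^{\sigma,1}} + C_b\big\||u|^{p-1}u\big\|_{X^{\sigma,1,b-1}_\delta}.
$$
Then I would insert a factor of $\delta^{b'-(b-1)} = \delta^{\,0+}$... more precisely apply Lemma \ref{lem2} with exponents $b-1 < b'' < \ldots$ chosen so that a positive power $\delta^{\theta}$, $\theta = \theta(p) > 0$, is gained; the cleanest route is to estimate the nonlinearity in $X^{\sigma,1,0}_\delta$ and use Lemma \ref{lem2} to pass from $X^{\sigma,1,b-1}_\delta$ up to $X^{\sigma,1,0}_\delta$ at the cost of $\delta^{1-b} = \delta^{\frac12-}$. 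This reduces matters to the multilinear bound
$$
\big\||u|^{p-1}u\big\|_{X^{\sigma,1,0}_\delta} \le C_{p,b}\,\|u\|_{X^{\sigma,1,b}_\delta}^{p},
$$
which, after extending $u$ off $[0,\delta]$ and using Lemma \ref{lem3} to handle the restriction norms, is exactly \eqref{lemma_estimate_3} of Proposition \ref{lem6} with all $u_j = u$ (and the $U_j$'s recording the conjugations appearing in $|u|^{p-1}u = u^{(p+1)/2}\overline{u}^{(p-1)/2}$).

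Combining these, writing $R = \|u_0\|_{G^{\sigma,1}}$, one gets a bound of the form
$$
\|\Phi(u)\|_{X^{\sigma,1,b}_\delta} \le C_b R + C_{p,b}\,\delta^{\frac12-}\|u\|_{X^{\sigma,1,b}_\delta}^{p},
$$
and similarly for the difference $\Phi(u)-\Phi(v)$ one gets a telescoping estimate
$$
\|\Phi(u)-\Phi(v)\|_{X^{\sigma,1,b}_\delta} \le C_{p,b}\,\delta^{\frac12-}\big(\|u\|_{X^{\sigma,1,b}_\delta}^{p-1}+\|v\|_{X^{\sigma,1,b}_\delta}^{p-1}\big)\|u-v\|_{X^{\sigma,1,b}_\delta},
$$
using multilinearity of \eqref{lemma_estimate_3} to expand $|u|^{p-1}u - |v|^{p-1}v$ as a sum of $p$ terms each containing one factor $u-v$. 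Then I would restrict $\Phi$ to the ball $B = \{u : \|u\|_{X^{\sigma,1,b}_\delta} \le 2C_b R\}$ and choose $\delta$ so that $C_{p,b}\,\delta^{\frac12-}(2C_b R)^{p-1} \le \frac12$; solving this for $\delta$ gives $\delta = c_0(1+R)^{-(2(p-1)-)}$ (the exponent $1/(\frac12-) = 2+$ on $(1+R)^{-(p-1)}$ produces the stated $2(p-1)-$), and on $B$ the map $\Phi$ is both a self-map and a contraction. The Banach fixed point theorem then yields a unique fixed point $u \in X^{\sigma,1,b}_\delta$ with $\|u\|_{X^{\sigma,1,b}_\delta} \le 2C_b R$, which, after absorbing constants, is \eqref{lowe}; Lemma \ref{lem00} upgrades this to $u \in C([0,\delta],G^{\sigma,1})$, and continuous dependence on $u_0$ follows from the same difference estimate applied with two different initial data.

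The main obstacle is purely bookkeeping rather than conceptual: one must make sure the power of $\delta$ extracted from Lemma \ref{lem2} is \emph{strictly positive}, which forces $b$ to be taken strictly greater than $\frac12$ (so that $b-1 < 0$ strictly and there is room below $X^{\sigma,1,0}$), and one must track how the "$+$'' and "$-$'' epsilons propagate through the exponent arithmetic to land on exactly $\delta = c_0(1+\|u_0\|_{G^{\sigma,1}})^{-(2(p-1)-)}$. A secondary point requiring a little care is the interplay between the full-space estimate \eqref{lemma_estimate_3} and the restricted spaces $X^{\sigma,1,b}_\delta$: one picks an extension $\tilde u$ of $u$ with $\|\tilde u\|_{X^{\sigma,1,b}} \le 2\|u\|_{X^{\sigma,1,b}_\delta}$, multiplies by a cutoff $\chi_{[0,\delta]}$ (legitimate by Lemma \ref{lem3}, since $0 < b < 1$ would be needed there — note $b = \frac12+ < 1$), applies \eqref{lemma_estimate_3}, and takes the infimum. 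Everything else is the textbook contraction scheme, and no genuinely new idea beyond Proposition \ref{lem6} is needed.
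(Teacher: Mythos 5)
Your existence scheme is essentially the paper's: a Picard iteration/contraction argument in $X^{\sigma,1,b}_\delta$ applied to the Duhamel map \eqref{DF}, using Lemma \ref{lem1} for the linear and inhomogeneous terms, Lemma \ref{lem2} to extract a positive power $\delta^{\frac12-}$ (the paper passes from $X^{\sigma,1,b-1}_\delta$ to $X^{\sigma,1,b'-1}_\delta$ with $b<b'<1$ and then uses $b'-1<0$ to dominate by the $X^{\sigma,1,0}_\delta$ norm, which yields the same gain $\delta^{b'-b}$ as your direct jump to exponent $0$), and \eqref{lemma_estimate_3} of Proposition \ref{lem6} for the nonlinearity; the bound \eqref{lowe}, the choice of $\delta$, and continuous dependence come out exactly as you describe. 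For the difference estimate the paper invokes the pointwise inequality \eqref{imp} before applying \eqref{lemma_estimate_3}, whereas you telescope $|u|^{p-1}u-|v|^{p-1}v$ into $p$ multilinear terms each containing one factor $u-v$; your version is if anything the more natural way to feed a difference into a multilinear estimate whose left-hand norm involves a derivative. (One bookkeeping quibble: solving $C\delta^{\frac12-}R^{p-1}\le\frac12$ gives $\delta\sim R^{-(p-1)/(\frac12-)}$, so the exponent has magnitude slightly \emph{larger} than $2(p-1)$; your ``$2+$'' arithmetic is right, and the labelling matches the statement's own notation.)

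There is, however, one genuine omission. The theorem asserts uniqueness in the class $C([0,\delta],G^{\sigma,1})$, while the Banach fixed point theorem only gives uniqueness of the fixed point within the ball of $X^{\sigma,1,b}_\delta$ on which your map is a contraction --- a strictly smaller class, since $X^{\sigma,1,b}_\delta\subset C([0,\delta],G^{\sigma,1})$ by Lemma \ref{lem00} but not conversely. The paper closes this with a separate unconditional uniqueness argument: if $u,v\in C([0,\delta],G^{\sigma,1})$ solve \eqref{NLS} with the same data and $w=u-v$, then multiplying $iw_t+\Delta w=|u|^{p-1}u-|v|^{p-1}v$ by $\bar w$, taking imaginary parts, integrating in $x$, and using \eqref{imp} together with the embeddings $G^{\sigma,1}\subset H^2\subset L^\infty$ yields $\frac{d}{dt}\|w(t)\|_{L^2_x}^2\le C\|w(t)\|_{L^2_x}^2$, whence $w\equiv0$ by Gr\"onwall's inequality. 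You should add this (or an equivalent energy argument) to obtain uniqueness in the class actually claimed.
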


\begin{proof}

Fix $\sigma >0$ and $u_0\in G^{\sigma,1}$.
By Lemma \ref{lem00} we shall employ an iteration argument in the space $X^{\sigma,1,b}_\delta$ instead of $G^{\sigma,1}$.
Let $\left\{ u^{(n)} \right\}^{\infty}_{n=0}$ be the sequence defined by
$$
\begin{cases}
iu^{(0)}_t + \Delta u^{(0)} = 0,\\
u^{(0)}(0) = u_0(x),
\end{cases}
\text{and}\quad
\begin{cases}
iu^{(n)}_t + \Delta u^{(n)} = | u^{(n-1)}|^{p-1}u^{(n-1)},\\
u^{(n)}(0) = u_0(x),
\end{cases}
$$
for $n\in\mathbb{Z}^+$.
Applying \eqref{DF}, we first write
$$u^{(0)}(t,x) = e^{it\Delta}u_0(x)$$
and
$$u^{(n)}(t,x) =e^{it\Delta}u_0(x) - i \int_0^t e^{i(t-s)\Delta}\big(|u^{(n-1)}(s,\cdot)|^{p-1}u^{(n-1)}(s,\cdot)\big) ds.$$
By Lemma \ref{lem1} we have
\begin{equation}\label{0step}
\| u^{(0)}\|_{X^{\sigma,1,b}_\delta}\leq C_b\| u_0 \|_{G^{\sigma,1}},
\end{equation}
and Lemmas \ref{lem1} and \ref{lem2} combined imply
\begin{align}\label{iii}
\nonumber\| u^{(n)} \|_{X^{\sigma,1,b}_\delta} &\leq C_b\| u_0\|_{G^{\sigma,1}} + C_b\big\| | u^{(n-1)}|^{p-1}u^{(n-1)}\big\|_{X^{\sigma,1,b-1}_\delta}\\
\nonumber&\leq C_{b,b'}\| u_0\|_{G^{\sigma,1}} +C_{b,b'}\delta^{b^\prime-b}\big\| | u^{(n-1)}|^{p-1}u^{(n-1)}\big\|_{X^{\sigma,1,b^\prime-1}_\delta}\\
&\leq C_{b,b'}\| u_0\|_{G^{\sigma,1}} +C_{b,b'}\delta^{b^\prime-b}\big\| | u^{(n-1)}|^{p-1}u^{(n-1)}\big\|_{X^{\sigma,1,0}_\delta}
\end{align}
with $1/2<b<b^\prime<1$.
Applying \eqref{lemma_estimate_3} to the second term in the right-hand side of \eqref{iii}, we obtain
\begin{equation}\label{nstep}
\| u^{(n)} \|_{X^{\sigma,1,b}_\delta}\leq C_{p,b,b'}\| u_0\|_{G^{\sigma,1}} +C_{p,b,b'}\delta^{b^\prime-b}\| u^{(n-1)}\|^{p}_{X^{\sigma,1,b}_\delta}.
\end{equation}
By induction together with \eqref{0step} and \eqref{nstep}, it follows that for all $n\geq0$
\begin{equation}\label{proof}
\| u^{(n)}\|_{X^{\sigma,1,b}_\delta} \leq 2C_{p,b,b'}\| u_0\|_{G^{\sigma,1}}
\end{equation}
with a choice of
$$\delta \leq \frac{1}{\big(2\cdot4^{p-1}C_{p,b,b'}^p\| u_0 \|^{p-1}_{G^{\sigma,1}}\big)^{\frac{1}{b^\prime - b}}}.$$
Furthermore, Lemma \ref{lem1} and Lemma \ref{lem2} with the same choice of $\delta$ yield
\begin{align*}
\| u^{(n)}-u^{(n-1)}\|_{X^{\sigma,1,b}_\delta}
&\leq C_b\big\| | u^{(n-1)}|^{p-1}u^{(n-1)} -| u^{(n-2)}|^{p-1}u^{(n-2)}\big\|_{X^{\sigma,1,b-1}_\delta} \\
&\leq C_{b,b'}\delta^{b^\prime-b}\big\| | u^{(n-1)}|^{p-1}u^{(n-1)} -| u^{(n-2)}|^{p-1}u^{(n-2)}\big\|_{X^{\sigma,1,b^\prime-1}_\delta}.
\end{align*}
Since $b'-1<0$ and
\begin{align}\label{imp}
\nonumber| a|^{p-1}a -| b|^{p-1}b
&\leq C_p(| a|^{p-1} + | b|^{p-1})| a-b|\\
&\leq C_p(| a| + | b|)^{p-1}|a-b|,
\end{align}
applying \eqref{lemma_estimate_3}, we now get
\begin{align*}
\| u^{(n)}-&u^{(n-1)}\|_{X^{\sigma,1,b}_\delta}\\
\leq &C_{p,b,b'}\delta^{b^\prime-b}\big(\| u^{(n-1)}\|_{X^{\sigma,1,b}_\delta} + \| u^{(n-2)}\|_{X^{\sigma,1,b}_\delta}\big)^{p-1}
\big(\| u^{(n-1)} - u^{(n-2)}\|_{X^{\sigma,1,b}_\delta}\big).
\end{align*}
By \eqref{proof} and the choice of $\delta$, this implies
$$
\| u^{(n)}-u^{(n-1)}\|_{X^{\sigma,1,b}_\delta}\leq \frac{1}{2}\| u^{(n-1)} - u^{(n-2)}\|_{X^{\sigma,1,b}_\delta}
$$
which guarantees the convergence of the sequence $\left\{ u^{(n)} \right\}^{\infty}_{n=0}$ to a solution $u$
with the bound \eqref{proof}.

Now assume that $u$ and $v$ are solutions to the Cauchy problem $\eqref{NLS}$ for initial data $u_0$ and $v_0$, respectively.
Then similarly as above, again with the same choice of $\delta$ and for any $\delta'$ such that $0<\delta^\prime<\delta$, we have
$$ \| u-v \|_{X^{\sigma,1,b}_{\delta^\prime}} \leq C_b\| u_0 -v_0 \|_{G^{\sigma,1}} + \frac{1}{2}\| u-v \|_{X^{\sigma,1,b}_{\delta^\prime}}$$
provided $\| u_0-v_0\|_{G^{\sigma,1}}$ is sufficiently small, which proves the continuous dependence of the solution on the initial data.

Lastly it remains to show the uniqueness of solutions.
Assume $u, v \in C_tG^{\sigma,1}$ are solutions to $\eqref{NLS}$ for the same initial data $u_0$ and let $w=u-v$. Then $w$ satisfies $iw_t+\Delta w =| u|^{p-1}u -| v|^{p-1}v$. Multiplying both sides by $\bar{w}$ and taking imaginary parts thereon, we have
$$
\textrm{Re}(\bar{w}w_t)+\textrm{Im}(\bar{w}\Delta w )=\textrm{Im}(| u|^{p-1}u\bar{w} -| v|^{p-1}v\bar{w}).
$$
Integrating in $x$ and applying \eqref{imp}, we obtain
\begin{align*}
\frac{d}{dt}\| w(t,x) \|^2_{L^2_x} &\leq \big\|\big(| u|^{p-1}u -| v|^{p-1}v\big)\bar{w}\big\|_{L^1_x}\\
&\leq C\big\|\big(| u|+| v|\big)^{p-1}w^2\big\|_{L^1_x}\\
&\leq C\big\|| u|+| v|\big\|_{L^\infty_x}^{p-1} \| w\|_{L^2_x}^2\\
&\leq C\| w(t,x)\|^2_{L^2_x}.
\end{align*}
Here we used the facts that
$$\frac{d}{dt}| w|^2=2\textrm{Re}(\bar{w}w_t),$$
$$\textrm{Im}\int \bar{w}\Delta w   \ dx=\textrm{Im}\int \nabla w \cdot \overline{\nabla w} \ dx=\textrm{Im}\int | \nabla w |^2 \ dx =0$$
and for all $2\leq q\leq\infty$
$$
G^{\sigma,1} \subseteq H^2 \subseteq L^q.
$$
By Gr\"onwall's inequality, we now conclude that $w=0$.
\end{proof}

\subsection{Almost conservation law}\label{subsec4.2}

We have established the existence of local solutions; we would like to apply the local result repeatedly to cover time intervals of arbitrary length. This, of course, requires some sort of control on the growth of the norm on which the local existence time depends. Observe that a solution to the Cauchy problem \eqref{NLS} satisfies $$M(u(t)):=\| u(t)\|_{L^2_x}^2=M(u(0))$$ and $$E(u(t)):=\|\nabla u(t)\|_{L^2_x}^2+\frac{2}{p+1}\| u(t)\|_{L^{p+1}_x}^{p+1}=E(u(0))$$which are the conservation of mass and energy, respectively. Define a quantity:
$$A_\sigma(t) = \| u(t) \|^{2}_{G_x^{\sigma,1}} + \frac{2}{p+1}\big\| e^{\sigma\| D \|}u(t)\big\|^{p+1}_{L^{p+1}_x}.$$
Then one can easily see $$A_0(t) = A_0(0)\ \textrm{ for all}\ t.$$
This quantity is approximately conservative in the sense that, although it fails to hold for $\sigma>0$, the discrepancy between both sides is bounded as well as the quantity reduces to the conservation of mass and energy in the limit $\sigma\rightarrow 0$. This approximate conservation will allow us (see Section \ref{sec5}) to repeat the local result on successive short-time intervals to reach any target time $T>0$, by adjusting the strip width parameter $\sigma$ according to the size of $T$.
\begin{thm}\label{thm3}
Let $d=1, 2$, and $\delta$ be as in Theorem \ref{thm2}. Then there exists $C_{p,b}>0$ such that for any $\sigma > 0$ and any solution $u \in X^{\sigma,1,b}_{\delta}$ to the Cauchy problem \eqref{NLS} on the time interval $[0,\delta]$, we have the estimate
\begin{equation}\label{acl00}
\sup_{t\in[0,\delta]}A_\sigma(t) \leq A_\sigma (0)+C_p\sigma A^{\frac{p+1}{2}}_{\sigma}(0)(1+A^{\frac{p-1}{2}}_{\sigma}(0)).
\end{equation}
\end{thm}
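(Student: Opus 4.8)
The plan is to pass to the twisted variable $v=e^{\sigma\|D\|}u$. By \eqref{f(v)} one has $e^{\sigma\|D\|}\big(|e^{-\sigma\|D\|}v|^{p-1}e^{-\sigma\|D\|}v\big)=|v|^{p-1}v+f(v)$, so applying $e^{\sigma\|D\|}$ to \eqref{NLS} shows that $v$ solves the same defocusing equation with one extra term,
\begin{equation*}
iv_t+\Delta v=|v|^{p-1}v+f(v),
\end{equation*}
the forcing $f(v)$ being the quantity that carries a factor $\sigma$ in \eqref{lemma_estimate_4}--\eqref{lemma_estimate_5}. Moreover $A_\sigma(t)=\|v(t)\|_{L^2_x}^2+\|\nabla v(t)\|_{L^2_x}^2+\tfrac{2}{p+1}\|v(t)\|_{L^{p+1}_x}^{p+1}$ is exactly the mass plus the energy \emph{of} $v$. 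A direct computation, using the equation to substitute for $v_t$ and performing the same integrations by parts in $x$ as in the proof of conservation of mass and energy (the identity also following directly from Duhamel's formula), shows that every contribution cancels except those produced by $f(v)$, leaving an identity of the schematic form
\begin{equation*}
\frac{d}{dt}A_\sigma(t)=\pm 2\,\mathrm{Im}\!\int_{\mathbb{R}^d}\overline{f(v)}\,v\,dx\ \pm\ 2\,\mathrm{Im}\!\int_{\mathbb{R}^d}\overline{\nabla f(v)}\cdot\nabla v\,dx\ \pm\ 2\,\mathrm{Im}\!\int_{\mathbb{R}^d}\overline{f(v)}\,|v|^{p-1}v\,dx .
\end{equation*}
Integrating from $0$ to $t\in[0,\delta]$ and taking absolute values reduces \eqref{acl00} to bounding the three space-time integrals of $\overline{f(v)}\,v$, $\overline{\nabla f(v)}\cdot\nabla v$, and $\overline{f(v)}\,|v|^{p-1}v$ over $[0,t]\times\mathbb{R}^d$.

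Two preparatory facts drive the estimates. First, pick an extension $\tilde v\in X^{1,b}(\mathbb{R}^{1+d})$ of $v$ with $\|\tilde v\|_{X^{1,b}}\le 2\|v\|_{X^{1,b}_\delta}$; since $f$ and $z\mapsto|z|^{p-1}z$ act only in the spatial variables, $f(\tilde v)$ and $|\tilde v|^{p-1}\tilde v$ agree with $f(v)$ and $|v|^{p-1}v$ on $[0,\delta]\times\mathbb{R}^d$. Second, by the local theory (Theorem \ref{thm2}, in particular \eqref{lowe}) and the elementary inequality $\|u_0\|_{G^{\sigma,1}}^2\le A_\sigma(0)$,
\begin{equation*}
\|v\|_{X^{1,b}_\delta}=\|u\|_{X^{\sigma,1,b}_\delta}\le C_b\|u_0\|_{G^{\sigma,1}}\le C_b\,A_\sigma(0)^{1/2},
\end{equation*}
so $\|\tilde v\|_{X^{1,b}}\le C_bA_\sigma(0)^{1/2}$ as well; since $s_0,s_1\le 1$ (see \eqref{s0s1}), this same quantity also dominates $\|\tilde v\|_{X^{s_0,b}}$, $\|\tilde v\|_{X^{s_1,b}}$ and $\|\tilde v\|_{X^{0,b}}$.

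For the first and third integrals I would use Cauchy--Schwarz in $L^2_{t,x}([0,t]\times\mathbb{R}^d)$. In the first one $\|\overline{f(v)}\|_{L^2_{t,x}}$ is estimated by \eqref{lemma_estimate_4} (with $v$ replaced by $\tilde v$) and $\|v\|_{L^2_{t,x}([0,t])}$ by H\"older in $t$ and Lemma \ref{lem00}, so the integral is $\le C_{p,b}\,\sigma\,\delta^{1/2}\|\tilde v\|_{X^{1,b}}^{p+1}\le C_{p,b}\,\sigma\,A_\sigma(0)^{(p+1)/2}$ (the factor $\delta^{1/2}$, bounded by a $p$-dependent constant, being absorbed). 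In the third one $\|\overline{f(v)}\|_{L^2_{t,x}}$ is again handled by \eqref{lemma_estimate_4} while $\big\||\tilde v|^{p-1}\tilde v\big\|_{L^2_{t,x}}$ is handled by \eqref{lemma_estimate_2} applied to the $p$-fold product $|\tilde v|^{p-1}\tilde v$, giving a bound $\le C_{p,b}\,\sigma\,\|\tilde v\|_{X^{1,b}}^{2p}\le C_{p,b}\,\sigma\,A_\sigma(0)^{p}$; this term is the source of the extra factor $A^{\frac{p-1}{2}}_\sigma(0)$ in \eqref{acl00}. For the second integral the plain $L^2$ pairing is unavailable --- it would need $\|\nabla f(v)\|_{L^2_{t,x}}$, i.e.\ two full derivatives on $f(v)$, which the multilinear estimates do not provide --- so I would pair in Bourgain spaces: up to the time cutoff the integral is $\le \|\overline{\nabla f(v)}\|_{X^{0,-b}}\|\nabla v\|_{X^{0,b}}$, and \eqref{lemma_estimate_5} together with $\|\nabla v\|_{X^{0,b}}\le\|v\|_{X^{1,b}}$ yields a bound $\le C_{p,b}\,\sigma\,\|\tilde v\|_{X^{1,b}}^{p+1}\le C_{p,b}\,\sigma\,A_\sigma(0)^{(p+1)/2}$. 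Adding the three bounds and taking the supremum over $t\in[0,\delta]$ gives \eqref{acl00}.

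The point that needs the most care is the second integral: unlike the other two it cannot be closed by elementary means, only through the Bourgain-space pairing, so the sharp time restriction to $[0,t]$ has to be absorbed at no cost --- and since $b>1/2$ the multiplier $\chi_{[0,t]}$ is not bounded on $X^{0,\pm b}$. I expect to handle this by factoring the cutoff through the restriction estimates of Section \ref{sec2} (Lemmas \ref{lem2} and \ref{lem3}) and the trivial embedding $X^{0,0}=L^2_{t,x}\subset X^{0,-b}$, so that the localized pieces of $\overline{\nabla f(v)}$ and of $\nabla v$ remain controlled by $\|v\|_{X^{1,b}_\delta}\sim A_\sigma(0)^{1/2}$; this bookkeeping is the technical heart of the proof. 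The remaining ingredients --- that the gain of a factor $\sigma$ is already built into \eqref{lemma_estimate_4}--\eqref{lemma_estimate_5} and hence passes directly to $A_\sigma$, and that $s_0,s_1\le 1$ makes every auxiliary norm collapse onto $A_\sigma(0)^{1/2}$ --- are routine.
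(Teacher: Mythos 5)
Your proposal matches the paper's proof essentially step for step: the same twisted variable $v=e^{\sigma\|D\|}u$ solving $iv_t+\Delta v=|v|^{p-1}v+f(v)$, the same three error integrals $\bar v f(v)$, $|v|^{p-1}v\overline{f(v)}$ and $\overline{\nabla v}\cdot\nabla f(v)$ coming out of the mass/energy computation, the same estimates \eqref{lemma_estimate_2}, \eqref{lemma_estimate_4} and \eqref{lemma_estimate_5} for them (with the Bourgain-space duality pairing reserved for the gradient term), and the same final substitution of \eqref{lowe} together with $\|u_0\|_{G^{\sigma,1}}^2\le A_\sigma(0)$ to produce the powers of $A_\sigma(0)$. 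The only deviations are bookkeeping (you use extensions where the paper uses the cutoff $\chi_{[0,\delta']}$ and Lemma \ref{lem3}), and the time-localization subtlety you flag for the $X^{0,\pm b}$ pairing is real but is one the paper itself passes over in exactly the same place.
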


\begin{proof}
Once we obtain
\begin{equation}\label{acl0}
\sup_{t \in [0,\delta]}A_{\sigma}(t)
\leq  A_\sigma(0) + C_{p,b}\sigma\| u \|^{p-2}_{X^{\sigma,s_0,b}_{\delta}}\| u \|^{3}_{X^{\sigma,1,b}_{\delta}}
\Big(1+\| u\|^{p-1}_{X^{\sigma,1,b}_{\delta}}\Big),
\end{equation}
the desired estimate \eqref{acl00} follows directly from using the bound \eqref{lowe} in Theorem \ref{thm2}.
Note here that $s_0\leq1$ (see \eqref{s0s1}).
We shall prove \eqref{acl0} from now on.

\

\noindent\emph{Step 1.}
Let $0\leq\delta'\leq\delta$.
Setting $v(t,x)=e^{\sigma\|D\|}u(t,x)$ and applying $e^{\sigma\|D\|}$ to \eqref{NLS}, we obtain
\begin{equation}
iv_t + \Delta v = | v |^{p-1}v + f(v)\label{step1}
\end{equation}
where $f(v)$ is as in \eqref{f(v)}.
Multiplying both sides by $\bar{v}$ and taking imaginary parts thereon, we have
\begin{equation*}
\textrm{Re}(\bar{v}v_t)+\textrm{Im}(\bar{v}\Delta v) = \textrm{Im}(\bar{v}f(v)),
\end{equation*}
or equivalently
$$
\partial_t | v |^2 + 2\textrm{Im}(\nabla\cdot(\bar{v}\nabla v))=2\textrm{Im}(\bar{v}f(v))
$$
where we used the facts $\partial_t | v |^2=2\textrm{Re}(\bar{v}v_t)$ and $\textrm{Im}(\bar{v}\Delta v) =\textrm{Im}(\nabla\cdot(\bar{v}\nabla v) -| \nabla v |^2)=\textrm{Im}(\nabla\cdot(\bar{v}\nabla v))$.
We may assume that $v$, $\nabla v$ and $\Delta v$ decay to zero
as $|x|\rightarrow\infty$.\footnote{This property can be shown by approximation using the monotone convergence theorem
and the Riemann-Lebesgue lemma whenever $u\in X_\delta^{\sigma,1,b}$. See the argument in \cite{SS}, p. 1018.}
Using this fact and integrating in space yield
$$
\theoremstyle{definition}\frac{d}{dt}\int_{\mathbb{R}^d} | v |^2 \, dx = 2\textrm{Im}\int_{\mathbb{R}^d} \bar{v}f(v) \, dx,
$$
and subsequently integrating in time over the interval $[0,\delta']$, we have
$$
\int_{\mathbb{R}^d} | v(\delta')|^2 \ dx = \int_{\mathbb{R}^d} | v(0)|^2 \ dx + 2\textrm{Im}\int_{\mathbb{R}^{1+d}}\chi_{[0,\delta']}(t)\bar{v}f(v)\ dtdx.
$$
Now by H\"older's inequality, \eqref{lemma_estimate_4} and Lemma \ref{lem3}, the rightmost integral has the following estimate:
\begin{align*}
\bigg| \int_{\mathbb{R}^{1+d}}\chi_{[0,\delta']}(t)\bar{v}f(v) dtdx\bigg|
&\leq \| v\|_{L^2_t[0,\delta']L^2_x}\big\| \overline{f(v)}\big\|_{L^2_t[0,\delta']L^2_x}\\
&\leq \| v\|_{X^{0,b}_{\delta'}}\cdot C_{p,b}\sigma\|\chi_{[0,\delta']}(t) v\|^{p-1}_{X^{s_0,b}}\|\chi_{[0,\delta']}(t) v\|_{X^{1,b}}\\
&\leq C_{p,b}\sigma\| v\|^p_{X^{s_0,b}_{\delta'}}\| v\|_{X^{1,b}_{\delta'}}.
\end{align*}
Therefore we have
\begin{equation}\label{step1_finish}
\| u(\delta') \|^2_{G^{\sigma,0}} \leq \| u(0) \|^2_{G^{\sigma,0}} +C_{p,b}\sigma\| u\|^p_{X^{\sigma,s_0,b}_{\delta'}}\| u\|_{X^{\sigma,1,b}_{\delta'}}.
\end{equation}

\

\noindent\emph{Step 2.}
Differentiating \eqref{step1} in space we see
$$i\nabla v_t+\Delta\nabla v=\nabla\big(|v|^{p-1}v\big)+\nabla f(v),$$
and multiplying both sides by $\overline{\nabla v}$, we get
$$
\textrm{Re}(\overline{\nabla v} \cdot \nabla v_t) +\textrm{Im}(\overline{\nabla v }\cdot \Delta\nabla v)
=\textrm{Im}\big(\overline{\nabla v} \cdot \nabla (| v|^{p-1}v)+\overline{\nabla v}\cdot\nabla f(v)\big).
$$
Note here that
$\partial_t | \nabla v |^2=2\textrm{Re}(\overline{\nabla v}\cdot\nabla v_t)$,
$\overline{\nabla v }\cdot \Delta\nabla v=\nabla\cdot(\overline{\nabla v}\Delta v)-|\Delta v|^2$ and
\begin{align*}
\overline{\nabla v}\cdot\nabla(| v|^{p-1}v)&=\nabla \cdot(\overline{\nabla v}| v|^{p-1}v)-\overline{\Delta v}| v|^{p-1}v\\
&=\nabla \cdot(\overline{\nabla v}| v|^{p-1}v)-\big(i\bar{v}_t +| v|^{p-1}\bar{v}+\overline{f(v)}\big)| v|^{p-1}v
\end{align*}
using \eqref{step1}.
It then follows that
\begin{align}\label{dfg}
\nonumber\partial_t\big(| \nabla v|^2& + \frac{2}{p+1}| v|^{p+1}\big)+2\textrm{Im}\big(\nabla\cdot(\overline{\nabla v}\Delta v)\big)\\
&=2\textrm{Im}\big(\nabla \cdot(\overline{\nabla v}| v|^{p-1}v)-| v|^{p-1}v\overline{f(v)}+\overline{\nabla v}\cdot\nabla f(v)\big).
\end{align}
Here we also used the fact that
\begin{align*}
\textrm{Im}(i\bar{v}_t| v|^{p-1}v)=\textrm{Re}(\bar{v}_t| v|^{p-1}v)&=\frac{1}{2}(\bar{v}_t| v|^{p-1}v + \overline{\bar{v}_t| v|^{p-1}v})\\
&=\frac1{p+1}\partial_t(\bar{v}^{\frac{p+1}{2}}v^{\frac{p+1}{2}})=\frac1{p+1}\partial_t | v|^{p+1}.
\end{align*}
Similarly as in Step 1, integrating \eqref{dfg} in space yields
$$\frac{d}{dt}\int_{\mathbb{R}^d}|\nabla v|^2+\frac2{p+1}|v|^{p+1}dx
=-2\textrm{Im}\int_{\mathbb{R}^d}|v|^{p-1}v\overline{f(v)}-\overline{\nabla v}\cdot\nabla f(v)\,dx,$$
and then integrating in time over the interval $[0,\delta']$ we have
\begin{align}\label{acl1}
\int_{\mathbb{R}^d} | \nabla v(\delta')|^2+\frac{2}{p+1}| v(\delta')|^{p+1}dx &=\int_{\mathbb{R}^d}| \nabla v(0)|^2+\frac{2}{p+1}| v(0)|^{p+1}dx\\ \nonumber -&2\textrm{Im}\int_{\mathbb{R}^{1+d}} \chi_{[0,\delta']}(t)\big(| v|^{p-1}v\overline{f(v)}-\overline{\nabla v}\cdot\nabla f(v)\big)dxdt.
\end{align}
Now we estimate the second integral on the right-hand side in the above.
By H\"older's inequality, we obtain
$$
\bigg| \int_{\mathbb{R}^{1+d}} \chi_{[0,\delta']}(t)| v|^{p-1}v\overline{f(v)}\, dxdt\bigg|
\leq \big\| |v|^{p-1}v\big\|_{L^2_t[0,\delta']L^2_x}\big\|\overline{f(v)}\big\|_{L^2_t[0,\delta']L^2_x}.
$$
We estimate the above inequality respectively as follows: by \eqref{lemma_estimate_2} and Lemma \ref{lem3} we estimate
\begin{equation}\label{estimate1}
\big\| |v|^{p-1}v\big\|_{L^2_t[0,\delta']L^2_x}\leq C_{p,b}\| \chi[0,\delta'](t)v\|^p_{X^{s_0,b}}\leq C_{p,b}\| v\|^{p}_{X^{s_0,b}_{\delta'}},
\end{equation}
while we easily have
\begin{equation}\label{estimate2}
\| {f(v)}\|_{L^2_t[0,\delta']L^2_x} \leq C_{p,b}\sigma\| v\|^{p-1}_{X^{s_0,b}_{\delta'}}\| v\|_{X^{1,b}_{\delta'}}
\end{equation}
by \eqref{lemma_estimate_4} and Lemma \ref{lem3}.
Combining \eqref{estimate1} with \eqref{estimate2}, we estimate
\begin{equation}\label{acl2}
\bigg| \int_{\mathbb{R}^{1+d}} \chi_{[0,\delta']}(t)| v|^{p-1}v\overline{f(v)} \, dxdt\bigg|
\leq C_{p,b}\sigma \| u\|^{2p-1}_{X^{\sigma,s_0,b}_{\delta'}}\| u\|_{X^{\sigma,1,b}_{\delta'}}.
\end{equation}
By H\"older's inequality, \eqref{lemma_estimate_5} and Lemma \ref{lem3}, similarly as above, we estimate
\begin{align}\label{acl3}
\nonumber\bigg| \int_{\mathbb{R}^{1+d}} \chi_{[0,\delta']}\overline{\nabla v}\cdot\nabla f(v) \,dxdt\bigg|
&\leq \|\nabla v\|_{X^{0,b}_{\delta'}}\| \overline{\nabla f(v)}\|_{X^{0,-b}_{\delta'}}\\
\nonumber&\leq \| v \|_{X^{1,b}_{\delta'}}\cdot C_{p,b}\sigma\| v \|^{p-2}_{X^{s_1,b}_{\delta'}}\| v\|^2_{X^{1,b}_{\delta'}}\\
&\leq C_{p,b}\sigma\| u \|^{p-2}_{X^{\sigma,s_1,b}_{\delta'}}\| u\|^3_{X^{\sigma,1,b}_{\delta'}}.
\end{align}
Therefore, by \eqref{acl1}, \eqref{acl2} and \eqref{acl3}, we get
\begin{align}\label{step2_finish}
\nonumber\| \nabla u(\delta')\|^2_{G^{\sigma,0}}+\frac{2}{p+1}\big\|& e^{\sigma\|D\|}u(\delta')\big\|^{p+1}_{L^{p+1}}\\
&\leq \| \nabla u(0)\|^2_{G^{\sigma,0}}+\frac{2}{p+1}\big\| e^{\sigma\|D\|}u(0)\big\|^{p+1}_{L^{p+1}}\\
\nonumber&+C_{p,b}\sigma\| u\|^{2p-1}_{X^{\sigma,s_0,b}_{\delta'}}\| u\|_{X^{\sigma,1,b}_{\delta'}}+
C_{p,b}\sigma\| u \|^{p-2}_{X^{\sigma,s_1,b}_{\delta'}}\| u\|^3_{X^{\sigma,1,b}_{\delta'}}.
\end{align}

\

\noindent\emph{Step 3.}
Adding \eqref{step1_finish} and \eqref{step2_finish} yields
$$\begin{aligned}
\| u(\delta') &\|^{2}_{G^{\sigma,1}} + \frac{2}{p+1}\big\| e^{\sigma\|D\|}u(\delta')\big\|^{p+1}_{L^{p+1}}\\
\leq &\| u(0) \|^{2}_{G^{\sigma,1}} + \frac{2}{p+1}\big\| e^{\sigma\|D\|}u(0)\big\|^{p+1}_{L^{p+1}} + C_{p,b}\sigma\| u \|^{p-2}_{X^{\sigma,s_0,b}_{\delta'}}\| u \|^{3}_{X^{\sigma,1,b}_{\delta'}}(1+\| u \|^{p-1}_{X^{\sigma,1,b}_{\delta'}})
\end{aligned}$$
for all $0\leq\delta'\leq\delta$.
Here we used the fact that $s_1\leq s_0\leq 1$.
This shows the desired estimate \eqref{acl0}.
\end{proof}

\section{Proof of Theorem \ref{thm1}}\label{sec5}

Following the argument in \cite{Te}, we first consider the case $s=1$.
By the embedding \eqref{emb}, the general case $s\in\mathbb{R}$ will reduce to $s=1$
as shown in the end of this section.

\

\noindent\emph{The case $s=1$.}
Let $u_0=u(0)\in G^{\sigma_0,1}(\mathbb{R}^d)$ for some $\sigma_0>0$ and $\delta$ be as in Theorem \ref{thm2}.
By time reversal symmetry of the equation \eqref{NLS}, we may from now on restrict ourselves to positive times $t\geq0$.
For arbitrarily large $T$, we want to show that the solution $u$ to \eqref{NLS} satisfies
$$u(t)\in G^{\sigma(t),1}\quad\textrm{for all }\, t\in [0,T],$$
where
\begin{equation}\label{label1}
\sigma(t)\geq \frac{c}{T}
\end{equation}
with a constant $c>0$ depending on $\| u_0 \|_{G^{\sigma_0,1}}$, $\sigma_0$, and $p$.

Now fix $T$ arbitrarily large.  It suffices to show
\begin{equation}\label{label2}
\sup_{t\in [0,T]}A_\sigma(t) \leq 2A_{\sigma_0}(0)
\end{equation}
for $\sigma$ satisfying \eqref{label1}; recall that $\| u(t) \|^{2}_{G_x^{\sigma,1}}\leq A_\sigma(t)$,
and the Gagliardo-Nirenberg interpolation inequailty confirms the right-hand side of \eqref{label2} is finite:
\begin{align*}
A_{\sigma_0}(0) &=\| u_0 \|^{2}_{G^{\sigma_0,1}} + \frac{2}{p+1}\big\| e^{\sigma_0\|D\|}u_0\big\|^{p+1}_{L^{p+1}}\\
&\leq \| u_0 \|^{2}_{G^{\sigma_0,1}} + C_{d,p}\big\| \nabla(e^{\sigma_0\|D\|}u_0)\big\|^{\alpha(p+1)}_{L^2}\big\| e^{\sigma_0\|D\|}u_0\big\|^{(1-\alpha)(p+1)}_{L^2}\\
&\leq \| u_0 \|^{2}_{G^{\sigma_0,1}} + C_{d,p}\| \nabla u_0\|^{\alpha(p+1)}_{G^{\sigma_0,0}}\| u_0\|^{(1-\alpha)(p+1)}_{G^{\sigma_0,0}}\\
&<\infty
\end{align*}
where $\alpha = \frac{d(p-1)}{2(p+1)}<1$, which in turn implies $u(t)\in G^{\sigma(t),1}$.

Now it remains to show \eqref{label2}.
Choose $n\in\mathbb{Z}^+$ so that $n\delta\leq T<(n+1)\delta$.
Using induction we shall show for any $k\in\left\{1,2,\cdots,n+1\right\}$ that
\begin{equation}\label{induction1}
\sup_{t\in[0,k\delta]}A_\sigma(t) \leq A_\sigma(0) + 2^pC_p\sigma k A^{\frac{p+1}{2}}_{\sigma_0}(0)\big(1+A^{\frac{p-1}{2}}_{\sigma_0}(0)\big)
\end{equation}
and
\begin{equation}\label{induction2}
\sup_{t\in[0,k\delta]}A_\sigma(t) \leq 2A_{\sigma_0}(0),
\end{equation}
provided $\sigma$ satisfies
\begin{equation}\label{set_sigma}
\frac{2^{p+1}T}{\delta}C_p\sigma A^{\frac{p-1}{2}}_{\sigma_0}(0)\big(1+A^{\frac{p-1}{2}}_{\sigma_0}(0)\big)\leq 1.
\end{equation}

Indeed, for $k=1$, we have from \eqref{acl00} that
\begin{align*}
\sup_{t\in [0,\delta]}A_\sigma(t) &\leq A_\sigma(0) + C_p\sigma A^{\frac{p+1}{2}}_{\sigma}(0)\big(1+A^{\frac{p-1}{2}}_{\sigma}(0)\big)\\
&\leq A_{\sigma}(0) + C_p\sigma A^{\frac{p+1}{2}}_{\sigma_0}(0)\big(1+A^{\frac{p-1}{2}}_{\sigma_0}(0)\big) \\
&\leq 2A_{\sigma_0}(0)
\end{align*}
where we used $A_{\sigma}(0)\leq A_{\sigma_0}(0)$ and $C_p\sigma A^{\frac{p-1}{2}}_{\sigma_0}(0)\big(1+A^{\frac{p-1}{2}}_{\sigma_0}(0)\big)\leq 1$ which is a direct consequence of \eqref{set_sigma}.

Now assume \eqref{induction1} and \eqref{induction2} hold for some $k\in\left\{1,2,\cdots,n\right\}.$
Applying \eqref{acl00}, \eqref{induction2} and \eqref{induction1}, we then have
\begin{align*}
\sup_{t\in[k\delta,(k+1)\delta]}A_\sigma(t)&\leq A_{\sigma}(k\delta) + C_p\sigma A^{\frac{p+1}{2}}_\sigma(k\delta)\big(1+A^{\frac{p-1}{2}}_\sigma(k\delta)\big)\\
&\leq A_{\sigma}(k\delta) + 2^pC_p\sigma A^{\frac{p+1}{2}}_{\sigma_0}(0)\big(1+A^{\frac{p-1}{2}}_{\sigma_0}(0)\big)\\
&\leq A_{\sigma}(0) + 2^pC_p\sigma(k+1) A^{\frac{p+1}{2}}_{\sigma_0}(0)\big(1+A^{\frac{p-1}{2}}_{\sigma_0}(0)\big).
\end{align*}
Combining this with the induction hypothesis \eqref{induction1} for $k$, we get
\begin{equation}\label{label4}
\sup_{t\in[0,(k+1)\delta]}A_\sigma(t) \leq A_\sigma(0) + 2^pC_p\sigma (k+1) A^{\frac{p+1}{2}}_{\sigma_0}(0)\big(1+A^{\frac{p-1}{2}}_{\sigma_0}(0)\big)
\end{equation}
which proves \eqref{induction1} for $k+1$.
Since $k+1\leq n+1\leq T/\delta+1\leq2T/\delta$, from \eqref{set_sigma} we also get
$$
2^pC_p\sigma (k+1) A^{\frac{p-1}{2}}_{\sigma_0}(0)\big(1+A^{\frac{p-1}{2}}_{\sigma_0}(0)\big)
\leq\frac{2^{p+1}T}{\delta}C_p\sigma A^{\frac{p-1}{2}}_{\sigma_0}(0)\big(1+A^{\frac{p-1}{2}}_{\sigma_0}(0)\big)\leq1
$$
which, along with \eqref{label4}, proves \eqref{induction2} for $k+1$.

Finally, the condition \eqref{set_sigma} is satisfied for
\begin{equation}\label{label3}
\sigma=\frac{\delta}{2^{p+1}{C_pA^{\frac{p-1}{2}}_{\sigma_0}(0)(1+A^{\frac{p-1}{2}}_{\sigma_0}(0))}}\cdot\frac{1}{T}.
\end{equation} Here the constant $c$ in \eqref{label1} can be given as $$c=\frac{\delta}{2^{p+1}{C_pA^{\frac{p-1}{2}}_{\sigma_0}(0)(1+A^{\frac{p-1}{2}}_{\sigma_0}(0))}}.$$
Note that this $c$ only depends on $\| u_0 \|_{G^{\sigma_0,1}}$, $\sigma_0$ and $p$.

\

\noindent\emph{The case $s\in\mathbb{R}$.}
Recall that \eqref{emb} states
$$
G^{\sigma,s}\subset G^{\sigma^\prime,s^\prime}\ \textrm{ for all }\ \sigma>\sigma'\geq 0\ \textrm{ and }\ s,s'\in\mathbb{R}\textrm{.}
$$
For any $s\in\mathbb{R}$ we use this embedding to get
$$
u_0\in G^{\sigma_0,s}\subset G^{\sigma_0/2,1}.
$$
From the local theory there is a $\delta=\delta(A_{\sigma_0/2}(0))$ such that
$$
u(t)\in G^{\sigma_0/2,1}\quad\textrm{for }\ 0\leq t\leq\delta\textrm{.}
$$
Similarly as in the case $s=1$, for $T$ fixed greater than $\delta$, we have $u(t)\in G^{\sigma^\prime,1}$ for $t\in[0,T]$
and $\sigma^\prime\geq c/T$ with $c>0$ depending on $\| u_0 \|_{G^{\sigma_0/2,1}}$, $\sigma_0$ and $p$.
Applying the embedding again, we conclude
$$
u(t)\in G^{\sigma,s}\quad\text{for}\,\ t\in [0,T]
$$
where $\sigma =\sigma^\prime/2$.

\

\noindent\textbf{Acknowledgment.} The authors would like to thank the anonymous referee for valuable comments on the Paley-Wiener theorem.

\end{document}